%
%
%
\documentclass[12pt]{amsart}

\usepackage{amsmath}
\usepackage{amssymb}
\usepackage{stmaryrd}
\usepackage{amscd}
\usepackage{calc}
\usepackage{ifthen}

\input prepictex   \input pictex    \input postpictex

\newcounter{boxsize}
\setcounter{boxsize}{3}
\newcounter{tempcounter}

\newcommand{\smallentryformat}{\scriptstyle\sf}
\def\arr#1#2{\arrow <2mm> [0.25,0.75] from #1 to #2}
\def\num#1{$\scriptscriptstyle\sf#1$}
\def\smallsq#1{\plot 0 0  0.#1 0  0.#1 0.#1  0 0.#1  0 0 /}
\def\ssq{$\smallsq2$}
\newcommand\smbox{\put(0,0){\line(1,0){\value{boxsize}}}%
  \put(\value{boxsize},0){\line(0,1){\value{boxsize}}}%
  \put(0,0){\line(0,1){\value{boxsize}}}%
  \put(0,\value{boxsize}){\line(1,0){\value{boxsize}}}}

\newcommand\singlebox[1]{\raisebox{-.4ex}{\begin{picture}(4,0)\setcounter{boxsize}{4}%
    \put(0,0)\smbox%
    \put(0,0){\makebox(\value{boxsize},\value{boxsize})[c]{%
      $\sf#1$}}\end{picture}}}

\newcommand\boxes[2]{\setcounter{tempcounter}{#1*\value{boxsize}/2}
  \multiput(0,-\value{tempcounter})(0,\value{boxsize}){#1}\smbox
  \ifthenelse{#2=1}{\put(0,-\value{tempcounter}){%
      \makebox(\value{boxsize},\value{boxsize})[c]{$\smallentryformat1$}}}{}%
  \ifthenelse{#2=2}{\put(0,-\value{tempcounter}){%
      \makebox(\value{boxsize},\value{boxsize})[c]{$\smallentryformat2$}}}{}%
  \ifthenelse{#2=3}{\put(0,-\value{tempcounter}){%
      \makebox(\value{boxsize},\value{boxsize})[c]{$\smallentryformat2$}}%
    \setcounter{tempcounter}{\value{tempcounter}-\value{boxsize}}%
  \put(0,-\value{tempcounter}){\makebox(\value{boxsize},
    \value{boxsize})[c]{$\smallentryformat1$}}}{}
  \ifthenelse{#2=5}{\put(0,-\value{tempcounter}){%
      \makebox(\value{boxsize},\value{boxsize})[c]{$\smallentryformat3$}}%
    \setcounter{tempcounter}{\value{tempcounter}-\value{boxsize}}%
  \put(0,-\value{tempcounter}){\makebox(\value{boxsize},
    \value{boxsize})[c]{$\smallentryformat2$}}
    \setcounter{tempcounter}{\value{tempcounter}-\value{boxsize}}%
  \put(0,-\value{tempcounter}){\makebox(\value{boxsize},
    \value{boxsize})[c]{$\smallentryformat1$}}}{}
  \ifthenelse{#2=8}{\put(0,-\value{tempcounter}){%
      \makebox(\value{boxsize},\value{boxsize})[c]{$\smallentryformat4$}}%
    \setcounter{tempcounter}{\value{tempcounter}-\value{boxsize}}%
  \put(0,-\value{tempcounter}){\makebox(\value{boxsize},
    \value{boxsize})[c]{$\smallentryformat3$}}
    \setcounter{tempcounter}{\value{tempcounter}-\value{boxsize}}%
  \put(0,-\value{tempcounter}){\makebox(\value{boxsize},
    \value{boxsize})[c]{$\smallentryformat2$}}
    \setcounter{tempcounter}{\value{tempcounter}-\value{boxsize}}%
  \put(0,-\value{tempcounter}){\makebox(\value{boxsize},
    \value{boxsize})[c]{$\smallentryformat1$}}}{}
}
\newcommand\numbox[1]{\put(0,0)\smbox%
  \put(0,0){\makebox(\value{boxsize},\value{boxsize})[c]{%
      $\smallentryformat#1$}}}
\newcommand\vdotbox{\setcounter{tempcounter}{\value{boxsize}*2}
  \multiput(0,-\value{boxsize})(\value{boxsize},0)2{%
    \line(0,1){\value{tempcounter}}}
  \put(0,\value{boxsize}){\line(1,0){\value{boxsize}}}
  \put(0,-2){\makebox(\value{boxsize},\value{tempcounter})[c]{%
      $\scriptscriptstyle\vdots$}}}
\newcommand\rectbox[1]{\setcounter{tempcounter}{#1*\value{boxsize}}
  \put(0,0){\line(1,0){\value{boxsize}}}
  \put(0,\value{tempcounter}){\line(1,0){\value{boxsize}}}
  \put(0,0){\line(0,1){\value{tempcounter}}}
  \put(\value{boxsize},0){\line(0,1){\value{tempcounter}}}}

\setlength\parindent{0ex}
\setlength\unitlength{1mm}
\markboth{Markus Schmidmeier}{The Entries in the LR-Tableau}

\DeclareMathOperator\type{{\rm type}}

\DeclareMathOperator\ann{{\rm ann}}

\DeclareMathOperator\incl{{\rm incl}}
\DeclareMathOperator\can{{\rm can}}
\DeclareMathOperator\Hom{{\rm Hom}}

\DeclareMathOperator\GL{{\rm GL}}
\DeclareMathOperator\Cok{{\rm Cok}}

\renewcommand\Im{{\rm Im}}
\DeclareMathOperator\len{{\rm len}}

\newtheoremstyle{mytheorems}{9pt}{6pt}{\itshape}{0pt}{\sc}{.}{ }{}
\newtheoremstyle{myremarks}{6pt}{3pt}{\normalfont}{0pt}{\it}{.}{ }{}

\theoremstyle{mytheorems}
\newtheorem{theorem}{Theorem}
\newtheorem{lemma}{Lemma}
\newtheorem{corollary}{Corollary}
\newtheorem{proposition}{Proposition}

\theoremstyle{myremarks}

\newtheorem*{acknowledgements}{Acknowledgement}

\newtheorem{remark}{Remark}
\newtheorem*{definition}{Definition}

\begin{document}
\title{The Entries in the LR-Tableau}
\author{Markus Schmidmeier}

\begin{abstract}
    Let $\Gamma$ be the Littlewood-Richardson 
    tableau corresponding to an embedding $M$ of
    a subgroup in a finite abelian $p$-group. 
    Each individual entry in $\Gamma$ yields information about 
    the homomorphisms from $M$ into a particular subgroup embedding,
    and hence determines the position of $M$ within the category
    of subgroup embeddings.  Conversely, this category
    provides a categorification for LR-tableaux in the sense that all
    subgroup embeddings corresponding to a given LR-tableau share certain
    homological properties.
\end{abstract}

\maketitle

\renewcommand{\thefootnote}{}
\footnotetext{{\it 2000 Mathematics Subject Classification:}
  5E10, 16G70, 20E07}
\footnotetext{{\it Keywords:}
  Littlewood-Richardson tableau, subgroup embeddings, Hall polynomial,
  Auslander-Reiten sequence, categorification}

\bigskip
Let $\alpha$, $\beta$, $\gamma$ be partitions describing the isomorphism
types of finite abelian $p$-groups $A$, $B$, $C$.
We know from theorems by Green and Klein (\cite{klein}, \cite{macdonald})
that there is a subgroup embedding $M:(U\subset B)$ 
where $U\cong A$ and $B/U\cong C$ if and only if 
there is a Littlewood-Richardson tableau $\Gamma$ of type $(\alpha,\beta,\gamma)$.
The LR-coefficient $c_{\alpha,\gamma}^\beta$ counts the number of such
LR-tableaux and provides decisive information in surprisingly many areas of
modern algebra.  In particular, it determines the multiplication of Schur
functions, describes the decomposition of tensor products of irreducible 
polynomial representations of $\GL_n(\mathbb C)$, detects the possible eigenvalues
of sums of Hermitian matrices, and forms the highest coefficient of the 
classical Hall polynomial. 
Recent research (see e.~g.\ \cite{CDW}, \cite{kt}, \cite{cg}) uses 
modern methods in representation
theory to study growth and applications of LR-coefficients. 

\smallskip
It is the aim of this manuscript to give an
interpretation for each individual entry in the LR-tableau $\Gamma$
in terms of properties of the corresponding embedding $M$,
when considered as an object in the category of all subgroup embeddings.

\begin{definition}
Let $R$ be a (commutative) discrete valuation ring with maximal ideal $(p)$ 
and residue field $k=R/(p)$. Denote by 
$\mathcal S$ the category of all embeddings $(A\subset B)$ of (finite length) $R$-modules;
morphisms in $\mathcal S$ are given by commutative diagrams.  
Embeddings where $B$ is cyclic are  called {\it pickets;} they are
determined uniquely, up to isomorphism, by the lengths $\ell = \len A$ and
$m=\len B$:
$$P_\ell^m :\quad \left( (p^{m-\ell})\subset R/(p^m)\right).$$
\end{definition}

Our main result gives an interpretation of the entries in the LR-tableau
in terms of homomorphisms into pickets,  and in terms of the
picket decomposition of a suitable subquotient of the embedding.

\begin{theorem}\label{theorem-picket}
  For an embedding $M: (A\subset B)$ with LR-tableau $\Gamma$ and 
  natural numbers $\ell,m$ with $1\leq \ell\leq m$, the 
  following numbers are equal.
  \begin{enumerate}
    \item The number of boxes $\singlebox \ell$
      in the $m$-th row of $\Gamma$.
    \item The multiplicity of the picket $P_1^m$ 
      as a direct summand of the subfactor
      $$\left(\frac{p^{\ell-1}A}{p^\ell A}\subset \frac{B}{p^\ell A}\right).$$
    \item The dimension
      $$\dim_k \frac{\Hom_{\mathcal S}(M,P_\ell^m)}{\Im\Hom_{\mathcal S}(M,g_\ell^m)}.$$
  \end{enumerate}
\end{theorem}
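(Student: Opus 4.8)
I would prove $(1)=(2)$ by a combinatorial/elementary argument and then $(2)=(3)$ (hence also $(1)=(3)$) by a cohomological computation; the second half is where the real work lies.

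\textbf{Step 1: $(1)=(2)$.} First I would invoke the layered form of the Green--Klein correspondence: the boxes of $\Gamma$ with entries $\le\ell$, together with the inner shape, constitute the partition $\mu^{(\ell)}:=(\type(B/p^\ell A))'$, so that $\mu^{(0)}$ is the inner shape $(\type(B/A))'$ and $\mu^{(\ell)}/\mu^{(\ell-1)}$ is the horizontal strip of boxes carrying the entry $\ell$. Hence the number in (1) equals $\mu^{(\ell)}_m-\mu^{(\ell-1)}_m=(\type(B/p^\ell A))'_m-(\type(B/p^{\ell-1}A))'_m$. For (2), put $V:=p^{\ell-1}A/p^\ell A$ and $B':=B/p^\ell A$; as $pV=0$, the subfactor $M''=(V\subseteq B')$ is an embedding of a semisimple module into $\soc B'$, hence a direct sum of pickets $P_1^\ast$ and $P_0^\ast$, and (choosing a direct decomposition of $B'$ along which $V$ becomes a coordinate subspace of $\soc B'$) the multiplicity of $P_1^m$ in $M''$ equals $\dim_k(V\cap p^{m-1}B')-\dim_k(V\cap p^mB')$. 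Using $B/p^{\ell-1}A\cong B'/V$, $\dim_k p^j(B'/V)=\dim_k p^jB'-\dim_k(V\cap p^jB')$ and $\dim_k(p^{j-1}N/p^jN)=(\type N)'_j$, this rewrites as $(\type(B/p^\ell A))'_m-(\type(B/p^{\ell-1}A))'_m$, which matches (1).

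\textbf{Step 2: $(2)=(3)$.} The key preliminary remark is that every morphism $M\to P_\ell^m$ annihilates $p^\ell A$, since its image in the length-$\ell$ submodule of $P_\ell^m$ is killed by $p^\ell$; thus $\Hom_{\mathcal S}(M,P_\ell^m)=\Hom_{\mathcal S}(M/p^\ell M,P_\ell^m)$, and similarly with the source of $g_\ell^m$, so (3) depends only on $M/p^\ell M=(\bar A\subseteq\bar B)$ with $\bar A=A/p^\ell A$, $\bar B=B/p^\ell A$. A morphism $\phi\colon\bar B\to R/(p^m)$ with $\phi(\bar A)\subseteq(p^{m-\ell})/(p^m)$ automatically satisfies $\phi(p^{\ell-1}\bar A)\subseteq(p^{m-1})/(p^m)=\soc R/(p^m)$, so it restricts to a morphism $M''\to P_1^m$; I would then use the definition of $g_\ell^m$ to show that $\phi$ lies in $\Im\Hom_{\mathcal S}(M,g_\ell^m)$ exactly when its restriction to $A$ lifts along $g_\ell^m$, and that this condition is detected already on the socle layer, i.e.\ by the corresponding condition for the pair $(M'',P_1^m)$. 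Since $M''$ is a sum of pickets $P_1^{b_i}$, $P_0^{c_j}$ and the number in (3) is additive in $M$, this reduces the statement to the case $\ell=1$ (where $M/pM=M''$), namely the identity $\dim_k\Hom_{\mathcal S}(N,P_1^m)/\Im\Hom_{\mathcal S}(N,g_1^m)=$ (multiplicity of $P_1^m$ in $N$) for $N$ a direct sum of pickets $P_1^b$ and $P_0^c$, which is a direct and finite calculation.

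\textbf{Main obstacle.} The difficult point is pinning down $\Im\Hom_{\mathcal S}(M,g_\ell^m)$: one must identify precisely which morphisms $M\to P_\ell^m$ factor through $g_\ell^m$ and show that the quotient is carried isomorphically onto the space of witnesses of $P_1^m$-summands of $M''$. Concretely, this amounts to controlling the image of $\Hom_{\mathcal S}(M,P_\ell^m)\to\Hom_R(A,\soc R/(p^m))$ composed with the connecting map into a suitable $\operatorname{Ext}^1_R$-group, and checking that the contributions of the higher layers $M/p^{\ell-1}M$ cancel. A minor additional point is the boundary case $\ell=m$, where the inner-shape combinatorics degenerates and the source of $g_m^m$ must be handled separately.
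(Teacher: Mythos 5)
Your Step 1 is correct and is essentially the paper's argument for $(1)=(2)$: the reduced pair $\bigl(p^{\ell-1}A/p^{\ell}A\subset B/p^{\ell}A\bigr)$ lies in $\mathcal S_1$, is a sum of pickets $P_1^{*},P_0^{*}$, and the multiplicity of $P_1^m$ is $(\gamma^{\ell})'_m-(\gamma^{\ell-1})'_m$; your socle-dimension count is just an explicit form of this. The problem is Step 2. Its pivotal claim --- that a map $M\to P_\ell^m$ lies in $\Im\Hom_{\mathcal S}(M,g_\ell^m)$ exactly when the induced map $M''\to P_1^m$ lies in $\Im\Hom_{\mathcal S}(M'',g_1^m)$, i.e.\ that factorization through $g_\ell^m$ is ``detected on the socle layer'' --- is asserted, not proved; the hinted mechanism (lifting the restriction to $A$ along $g_\ell^m$, a connecting map into an Ext-group) is not an argument, and you yourself list exactly this identification of $\Im\Hom_{\mathcal S}(M,g_\ell^m)$ as the unresolved ``main obstacle''. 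Moreover the boundary case $\ell=m$, where $g_m^m$ has a single component while $g_1^m$ has two, is explicitly deferred. So the proof of $(2)=(3)$ is incomplete at its central point.

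For what it is worth, your route can be completed, and the key is an identity the paper isolates: $\Hom_{\mathcal S}\bigl((A\subset B),P_\ell^m\bigr)=\Hom_R(B/p^{\ell}A,P^m)$, because the $p^{\ell}$-torsion part of $R/(p^m)$ is exactly $(p^{m-\ell})/(p^m)$, so the subgroup condition is automatic. Under this identification $\Hom_{\mathcal S}(M,P_\ell^m)$ and $\Hom_{\mathcal S}(M'',P_1^m)$ are literally the same space $\Hom_R(B/p^{\ell}A,P^m)$, and both $\Im\Hom_{\mathcal S}(M,g_\ell^m)$ and $\Im\Hom_{\mathcal S}(M'',g_1^m)$ become the span of the maps annihilating $p^{\ell-1}A/p^{\ell}A$ together with the maps whose image lies in $pP^m$; when $\ell=m$ the second set is contained in the first, which disposes of the boundary case. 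The paper, however, sidesteps the whole issue: using the same identity plus $\beta'_m=\len\Hom_R(B,P^m)-\len\Hom_R(B,P^{m-1})$, it writes the box count $(\gamma^{\ell})'_m-(\gamma^{\ell-1})'_m$ as an alternating sum of four Hom-lengths and recognizes this as $\len\Cok\Hom_{\mathcal S}(M,g_\ell^m)$ by applying the left exact functor $\Hom_{\mathcal S}(M,-)$ to the exact sequence $0\to P_{\ell-1}^{m-1}\to P_{\ell-1}^m\oplus P_\ell^{m-1}\to P_\ell^m\to 0$ (respectively to the monomorphism $g_m^m$ when $\ell=m$); no description of which individual maps factor through $g_\ell^m$ is ever needed. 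You would strengthen your write-up either by adopting that length count or by actually carrying out the subspace identification above.
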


The map $g_\ell^m$ is the sink map for $P_\ell^m$ in a suitable picket category. It is
defined as follows, with each component map an inclusion of pickets:
$$g_\ell^m:\left\{\begin{array}{rcll} 
                     P^m_{m-1} & \to & P_m^m & \quad\text{if}\; \ell = m \\
     P_{\ell-1}^m \oplus P_\ell^{m-1}  & \to & P_\ell^m & \quad\text{if}\; 1\leq \ell<m \\
            P_0^{m-1}  & \to & P_0^m & \quad\text{if}\; \ell = 0 \\
\end{array}\right. $$

\begin{remark}
$\Im\Hom(M,g^m_\ell)$ is the subgroup of $\Hom(M,P_\ell^m)$ of all maps which
factor through $g^m_\ell$, or equivalently, the subgroup generated by all maps
which factor through any proper inclusion of a picket in $P_\ell^m$.
\end{remark}

\begin{remark}
The category $\mathcal S$ of submodule embeddings provides a categorification 
for LR-tableaux in the sense that objects in $\mathcal S$ which correspond to the same
LR-tableau share the homological properties given by homomorphisms into pickets
(statement 3.\ in the Theorem). As we will see in the examples in 
Section~{\ref{section-examples}},
such objects are located in the same vicinity in the Auslander-Reiten quiver.
\end{remark}

\begin{definition}
For natural numbers $\ell, n$, let $\mathcal S_\ell$ and $\mathcal S(n)$ denote the full
subcategories of $\mathcal S$ of all pairs $(A\subset B)$ which satisfy $p^\ell A=0$ and
$p^n B=0$, respectively. 
\end{definition}

\begin{proposition}\label{prop-bilinear}
Let $1\leq \ell\leq m\leq n$ be natural numbers.
Define $C_\ell^m=\tau_{\mathcal S(n)}^{-1}(P^{m-1}_{\ell-1})$ if $\ell<m$, and 
$C_\ell^m=P_n^n$ if $\ell=m$. 
\begin{enumerate}
\item The factor $\displaystyle
         \frac{\Hom_{\mathcal S}(C_\ell^m,P_\ell^m)}{\Im\Hom_{\mathcal S}(C_\ell^m,g_\ell^m)}$ is
  a 1-di\-men\-sional $k$-vector space.
\item Suppose that $M$ is an embedding in $\mathcal S(n)$. 
The bilinear form given by composition,
  $$\phantom.\qquad\frac{\Hom_{\mathcal S}(M,P_\ell^m)}{\Im\Hom_{\mathcal S}(M,g_\ell^m)}\times 
        \Hom_{\mathcal S}(C_\ell^m,M)\longrightarrow
  \frac{\Hom_{\mathcal S}(C_\ell^m,P_\ell^m)}{\Im\Hom_{\mathcal S}(C_\ell^m,g_\ell^m)}$$
  is left non-degenerate.
\end{enumerate} 
\end{proposition}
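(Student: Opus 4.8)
The plan is to treat the two parts in sequence, deriving part~(1) as the special case $M = C_\ell^m$ of part~(2), or rather using part~(1) as the normalization needed to make sense of the pairing in part~(2). For part~(1), I would compute $\Hom_{\mathcal S}(C_\ell^m, P_\ell^m)$ and the subspace $\Im\Hom_{\mathcal S}(C_\ell^m, g_\ell^m)$ directly. When $\ell = m$ we have $C_m^m = P_n^n$, and $\Hom_{\mathcal S}(P_n^n, P_m^m)$ consists of maps $R/(p^n)\to R/(p^m)$ carrying $(p^{n-n}) = R/(p^n)$ into $(p^{m-m}) = R/(p^m)$ — i.e.\ every module map, with the submodule condition vacuous, so this Hom is one-dimensional already (spanned by the canonical surjection up to scalar modulo $p$), and one checks the image of $g_m^m = (P_{m-1}^m \hookrightarrow P_m^m)$ is zero here because any map $P_n^n \to P_{m-1}^m$ lands in a picket with strictly smaller ambient module. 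When $\ell < m$, the key input is that $C_\ell^m = \tau^{-1}_{\mathcal S(n)}(P_{\ell-1}^{m-1})$; by Auslander-Reiten theory in the category $\mathcal S(n)$ there is an AR-sequence with left term $P_{\ell-1}^{m-1}$, right term $C_\ell^m$, and the middle term is exactly the source/sink data encoded in $g_\ell^m$. The sink map $g_\ell^m$ for $P_\ell^m$ and the AR-sequence ending in $C_\ell^m$ fit together: a map $C_\ell^m \to P_\ell^m$ either factors through $g_\ell^m$ (these are precisely the ``old'' maps) or it does not, and the quotient measures whether the connecting map to $P_\ell^m$ is nonzero. The AR-sequence being almost split means this quotient is exactly $\dim_k = 1$.

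For part~(2), the bilinear form is $(\varphi, \psi) \mapsto \varphi\circ\psi$ where $\varphi: M \to P_\ell^m$ and $\psi: C_\ell^m \to M$. I first check it is well-defined: if $\varphi$ factors through $g_\ell^m$ then so does $\varphi\circ\psi$, so the form descends to the quotient on the left; and the target quotient is the one-dimensional space from part~(1). Left non-degeneracy means: if $\varphi: M \to P_\ell^m$ is such that $\varphi\circ\psi$ factors through $g_\ell^m$ for every $\psi: C_\ell^m \to M$, then $\varphi$ itself factors through $g_\ell^m$. I would prove the contrapositive. Suppose $\varphi$ does not factor through $g_\ell^m$. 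Since $g_\ell^m$ is the sink map for $P_\ell^m$ in the relevant picket category, $\varphi$ not factoring through it forces $\varphi$ to be (split) surjective, or at least to be a ``minimal'' map detecting $P_\ell^m$ as a summand-type piece of $M$; more precisely $M$ must have a summand, or a subquotient in the sense of statement~(2) of Theorem~\ref{theorem-picket}, on which $\varphi$ restricts to an isomorphism onto a copy of $P_\ell^m$ modulo smaller pickets. I then need to produce $\psi: C_\ell^m \to M$ with $\varphi\circ\psi$ not factoring through $g_\ell^m$, i.e.\ $\varphi\circ\psi$ giving the nonzero class in the one-dimensional quotient. Here $C_\ell^m$ plays the role of a projective-like test object: because $C_\ell^m = \tau^{-1}(P_{\ell-1}^{m-1})$ (or $P_n^n$), it maps onto the relevant picket $P_\ell^m$ via a map that does not factor through $g_\ell^m$ (that is exactly part~(1)), and one lifts this through the structure of $M$ — using that $M \in \mathcal S(n)$ so that all the relevant Homs are computed inside a category where $C_\ell^m$ has the right universal property — to get $\psi$.

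The main obstacle, and where I would spend the most care, is the middle step of part~(2): showing that ``$\varphi$ does not factor through $g_\ell^m$'' genuinely produces a copy of $P_\ell^m$ (up to the $g_\ell^m$-ambiguity) sitting inside $M$ in a way that $C_\ell^m$ can see. This requires knowing that $g_\ell^m$ really is the sink map in the correct subcategory and understanding its cokernel/image precisely — equivalently, that $\{$maps $M \to P_\ell^m$ not factoring through $g_\ell^m\}$ is controlled by statements~(1)--(2) of Theorem~\ref{theorem-picket}, which give the combinatorial count via the LR-tableau and the picket decomposition of $(p^{\ell-1}A/p^\ell A \subset B/p^\ell A)$. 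So in practice I would first invoke Theorem~\ref{theorem-picket} to reduce to the case $M = P_1^m$ (or a sum of such pickets together with pickets not receiving maps from $C_\ell^m$), and then the non-degeneracy becomes the concrete assertion that the composition $C_\ell^m \to P_1^m \to P_\ell^m$ can be arranged to be the nonzero class — an explicit calculation with cyclic modules $R/(p^j)$ and the canonical inclusions and surjections among them. The duality-flavored phrasing (``left non-degenerate'', pairing into a one-dimensional space) strongly suggests this is the module-theoretic shadow of an Auslander-Reiten / Serre-type duality, and if a clean such duality is available earlier it would shorten the argument considerably; absent that, the bare-hands reduction via Theorem~\ref{theorem-picket} is the safe route.
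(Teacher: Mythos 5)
Your proposal has a genuine gap at exactly the point you flag as the main obstacle: producing, from a map $r\colon M\to P_\ell^m$ that does not factor through $g_\ell^m$, a map $\psi\colon C_\ell^m\to M$ with $r\psi$ nonzero in the quotient. The route you suggest --- ``reduce via Theorem~\ref{theorem-picket} to the case $M=P_1^m$'' --- does not work: statement~(2) of that theorem concerns the picket decomposition of the subquotient $\bigl(p^{\ell-1}A/p^\ell A\subset B/p^\ell A\bigr)$, not of $M$ itself, and a map from $C_\ell^m$ into that subquotient does not lift to a map into $M$; moreover $C_\ell^m$ is not projective when $\ell<m$, so there is no ``universal property'' available to do the lifting you invoke. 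Likewise, a map $M\to P_\ell^m$ not factoring through $g_\ell^m$ need not detect $P_\ell^m$ (or any picket) as a direct summand of $M$; the example in Section~\ref{section-examples} shows non-picket indecomposables admitting such maps. The paper's argument for $\ell<m$ is different and is the missing idea: pull back the exact sequence $\mathcal A\colon 0\to P_{\ell-1}^{m-1}\to P_{\ell-1}^m\oplus P_\ell^{m-1}\xrightarrow{g_\ell^m}P_\ell^m\to 0$ along $r$ to get a nonsplit sequence starting at $P_{\ell-1}^{m-1}$, then use the Auslander--Reiten sequence $\mathcal E$ in $\mathcal S(n)$ starting at $P_{\ell-1}^{m-1}$ (whose end term is $C_\ell^m$) and the left almost split property of its first map to factor the pulled-back sequence, obtaining $q\colon C_\ell^m\to M$; a diagram chase gives $rq=g_\ell^m z+h$ with $h$ not factoring through $g_\ell^m$, hence $rq$ is nonzero in the quotient. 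For $\ell=m$ the paper uses that $r$ is then surjective and $P_n^n$ is projective, so the canonical map $P_n^n\to P_m^m$ lifts through $r$.

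Part~(1) of your proposal is also not sound as written. For $\ell=m$, $\Hom_{\mathcal S}(P_n^n,P_m^m)\cong\Hom_R(R/(p^n),R/(p^m))$ has $k$-dimension $m$, not $1$, and $\Im\Hom(P_n^n,g_m^m)$ is not zero: $P_{m-1}^m$ has the same ambient module $R/(p^m)$, and the maps factoring through it are exactly those with image in $(p)/(p^m)$; the quotient is $1$-dimensional, but not for the reasons you give. For $\ell<m$, the middle term of the AR-sequence in $\mathcal S(n)$ ending at $C_\ell^m$ is not the source $P_{\ell-1}^m\oplus P_\ell^{m-1}$ of $g_\ell^m$ (that sequence lives in the picket category $\mathcal P{\it ic}$), and the $1$-dimensionality of $\Hom(C_\ell^m,P_\ell^m)/\Im\Hom(C_\ell^m,g_\ell^m)$ does not follow merely from ``the sequence is almost split.'' The paper instead computes $C_\ell^m=\tau^{-1}_{\mathcal S(n)}(P_{\ell-1}^{m-1})$ explicitly as the kernel embedding $P^{n+\ell-m}\to P^n\oplus P^{\ell-1}$ using \cite{rs-art}, writes down its (unique) LR-tableau, observes it has exactly one box $\ell$ in row $m$, and applies Theorem~\ref{theorem-picket}; some such concrete computation is needed to close your part~(1).
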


As a consequence, in each category $\mathcal S(n)$, the embeddings $M$ which have
an entry $\singlebox \ell$ in the $m$-th row of their LR-tableau are characterized
by admitting maps $C_\ell^m\to M\to P_\ell^m$ such that the composition
does not factor through $g_\ell^m$. In this sense, $M$ is located between
$C_\ell^m$ and $P_\ell^m$. 

\begin{acknowledgements}
The results in this paper have been obtained during the academic year 2008/09
when the author was visiting the Collaborative Research Center 701 at Bielefeld University,
and then the Mathematical Institute at Cologne University.
He would like to thank Claus Michael Ringel and Steffen K\"onig for invitation 
and helpful advice.

\smallskip
The author also wishes to thank the referee,
in particular for suggesting a significant simplification
for the proof of Theorem~\ref{theorem-picket}.
\end{acknowledgements}

\section{Partitions}

For $m$ a natural number, let $P^m$ be the cyclic $R$-module $R/(p^m)$. 
Recall that an arbitrary $R$-module is determined uniquely, up to isomorphism,
by a partition:

\begin{eqnarray*}
\big\{ p\text{-modules} \big\}{\big/}_{\textstyle\cong}  & 
          \quad \stackrel{1-1}{\longleftrightarrow}\quad & 
          \big\{\text{partitions}\big\}\\
    B  & \longmapsto & 
          \type(B)=\beta\\
       &  & \quad\text{if}\quad \beta'_i=\dim_k\big(p^{i-1}B/p^iB\big) \\
M(\beta)=\bigoplus\nolimits_{i=1}^s P^{\beta_i} & \longmapsfrom & 
          \beta=(\beta_1,\ldots,\beta_s)\\
\end{eqnarray*}

\smallskip
Here $\beta'$ denotes the transpose of the partition $\beta$.
Let $\beta,\gamma$ be partitions. We say $\gamma\leq\beta$ if 
$\gamma_i\leq\beta_i$ holds for each $i$; in this case $\beta$ and
$\gamma$ define a {\it skew tableau} $\beta-\gamma$. 
The skew tableau is
a {\it horizontal strip} if $\beta_i-\gamma_i\leq 1$ holds for 
each $i$, and the length $|\beta-\gamma|$ of the strip is given by the
sum $\sum_i(\beta_i-\gamma_i)$.

\begin{definition}
  An increasing sequence of partitions $\Gamma=[\gamma^0,\ldots,\gamma^s]$ defines an
  {\it LR-tableau of type} $(\alpha,\beta,\gamma)$ if $s=\alpha_1$, 
  $\gamma^0=\gamma$, $\gamma^s=\beta$
  and the following two conditions are satisfied.
  \begin{enumerate} 
  \item  For each $\ell\geq 1$, the skew tableau $\gamma^\ell-\gamma^{\ell-1}$
    is a horizontal strip of length $|\gamma^\ell-\gamma^{\ell-1}|=\alpha'_\ell$.
  \item The {\it lattice permutation property} is satisfied, that is for each $\ell\geq 2$ 
    and each $h$ we have
    $$ \sum_{i\geq h}(\gamma_i^\ell-\gamma_i^{\ell-1}) \leq 
    \sum_{i\geq h}(\gamma_i^{\ell-1}-\gamma_i^{\ell-2}).$$
  \end{enumerate}
\end{definition}

\smallskip
As usual, we picture an LR-tableau $\Gamma=[\gamma^0,\ldots,\gamma^s]$ 
by labelling each box in the skew tableau $\gamma^\ell-\gamma^{\ell-1}$ by $\singlebox \ell$.
Note that some authors replace the partitions in the tableau by their
transposes; here we follow the convention in \cite{klein}.

\smallskip
Given an embedding $M:(A\subset B)$ we obtain an LR-tableau as follows.
Put $\alpha=\type(A)$, $\beta=\type(B)$, $\gamma=\type(B/A)$,
let $s=\alpha_1$ be the Loewy length of $A$, and
define for each $0\leq \ell\leq s$ the partition 
$\gamma^\ell=\type(B/p^\ell A)$.  
According to Green's Theorem~\cite[Theorem~4.1]{klein}, 
the sequence $\Gamma=[\gamma^0,\ldots,\gamma^s]$ 
forms an LR-tableau of type $(\alpha,\beta,\gamma)$;
we say $\Gamma$ is {\it the LR-tableau for} $M$.

\section{Semisimple submodules}

The embeddings $(A\subset B)$ of $p$-mo\-dules where $A$ is semisimple
form the category $\mathcal S_1$, they are well understood:
The indecomposable embeddings are pickets $P_\ell^m$ with $\ell\leq 1$,
and arbitrary embeddings are given by their LR-tableaux \cite[Proposition 3.3]{klein}:

\begin{proposition} \label{prop-elementary}
The following map defines a one-to-one correspondence between the isomorphism
types of embeddings in $\mathcal S_1$ and horizontal strips:
\begin{eqnarray*}
\big\{\text{objects $(A\subset B)\in\mathcal S_1$}\big\}{\big/}_{\textstyle\cong} &
   \stackrel{1-1}{\longleftrightarrow} &
   \left\{\text{horizontal strips}\right\} \\
(A\subset B) & \longmapsto & \type(B)-\type(B/A) \\
\bigoplus P^{\beta_i}_{\beta_i-\gamma_i}  & \longmapsfrom & \beta-\gamma\\
\end{eqnarray*}

\vspace{-6ex}\qed
\end{proposition}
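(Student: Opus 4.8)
The plan is to split the proof into a structural step and a combinatorial one. The structural step is to show that every object of $\mathcal S_1$ is isomorphic to a direct sum of pickets $P_\ell^m$ with $\ell\le 1$; this simultaneously identifies the indecomposables of $\mathcal S_1$, since each $P_0^m$ and $P_1^m$ is indecomposable (a nontrivial direct decomposition in $\mathcal S$ would split the indecomposable module $R/(p^m)$) and $\mathcal S$ is Krull--Schmidt (the endomorphism ring of $(A\subset B)$ is a subring of the finite-length $R$-module $\Hom_R(B,B)$ on which $p$ acts nilpotently, hence is artinian). Granting the decomposition, it then remains to check that the two displayed maps are mutually inverse on the resulting isomorphism classes, which is bookkeeping.

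I would prove the decomposition by induction on $\len B$. If $A=0$, the structure theorem gives $B\cong\bigoplus_j R/(p^{m_j})$, hence $(0\subset B)\cong\bigoplus_j P_0^{m_j}$. If $A\ne 0$, among all cyclic submodules $\langle c\rangle\subseteq B$ with $\langle c\rangle\cap A\ne 0$ pick one, $\langle b\rangle$, of maximal length $m$; so $p^mb=0\ne p^{m-1}b$ and $\langle b\rangle\cong R/(p^m)$. Then $\langle b\rangle\cap A$ is a nonzero submodule of $\langle b\rangle$, hence contains $\soc\langle b\rangle$, so $w:=p^{m-1}b$ is a nonzero element of $A$ and $\langle b\rangle\cap A=\langle w\rangle=\soc\langle b\rangle$. \emph{The crux is that $\langle b\rangle$ is pure in $B$}, i.e.\ $\langle b\rangle\cap p^nB=p^n\langle b\rangle$ for every $n$, equivalently $p^jb\notin p^{j+1}B$ for $0\le j\le m-1$. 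If this failed, write $p^jb=p^{j+1}c$ and set $d:=b-pc$; then $p^jd=0$, so $p^{m-1}d=0$ and $w=p^{m-1}b=p^mc$, which forces $p^mc\ne 0$, hence $\langle c\rangle$ to have length $>m$ while still $\langle c\rangle\cap A\ni w\ne 0$ --- contradicting the maximality of $m$. A pure submodule of a finite-length module is a direct summand, so $B=\langle b\rangle\oplus B_0$. For $a\in A$, writing $a=rb+b_0$ with $b_0\in B_0$, the relation $pa=0$ forces $prb=0$, so $rb\in Rp^{m-1}b=\langle w\rangle$ and $b_0=a-rb\in A\cap B_0$; thus $A=\langle w\rangle\oplus(A\cap B_0)$. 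Hence $(A\subset B)\cong P_1^m\oplus(A\cap B_0\subset B_0)$, and the induction hypothesis applies to the second summand.

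For the combinatorial step, take $M\cong\bigoplus_j P_{\ell_j}^{m_j}$ with $\ell_j\in\{0,1\}$ and order the summands so that $m_1\ge m_2\ge\cdots$ and, within a block of equal $m_j$, so that the indices with $\ell_j=0$ come before those with $\ell_j=1$. Then $(m_j)_j$ and $(m_j-\ell_j)_j$ are both weakly decreasing; since $B\cong\bigoplus_j R/(p^{m_j})$ and $B/A\cong\bigoplus_j R/(p^{m_j-\ell_j})$, the partition $\beta=\type(B)$ has $\beta_j=m_j$ and $\gamma=\type(B/A)$ has $\gamma_j=m_j-\ell_j$ (with trailing zeros for parts coming from $P_1^1$-summands). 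In particular $\gamma\le\beta$ with $\beta_j-\gamma_j=\ell_j\in\{0,1\}$, so the image of $M$ under the first map is a genuine horizontal strip, and $P^{\beta_j}_{\beta_j-\gamma_j}=P_{\ell_j}^{m_j}$, so applying the second map to this strip returns $\bigoplus_j P_{\ell_j}^{m_j}\cong M$. Conversely, given any horizontal strip $\beta-\gamma$ (so $\beta_i-\gamma_i\in\{0,1\}$ for all $i$), the object $\bigoplus_i P^{\beta_i}_{\beta_i-\gamma_i}$, say $(A^\ast\subset B^\ast)$, satisfies $B^\ast=\bigoplus_i R/(p^{\beta_i})$ and $B^\ast/A^\ast=\bigoplus_i R/(p^{\gamma_i})$, so $\type(B^\ast)=\beta$ and $\type(B^\ast/A^\ast)=\gamma$ since $\beta$ and $\gamma$ are already partitions; hence the first map sends it back to $\beta-\gamma$. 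Thus the two maps are mutually inverse bijections. The only nonroutine point is the purity argument in the second paragraph; the rest is the structure theory of modules over a discrete valuation ring together with bookkeeping.
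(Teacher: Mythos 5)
Your proof is correct. Note that the paper does not actually prove this proposition at all: it is stated with a citation to Klein's Proposition 3.3 and a \qed, so there is nothing in the text to compare against line by line. What you supply is a self-contained argument, and its one genuinely nonroutine ingredient --- splitting off a picket $P_1^m$ by choosing a cyclic submodule $\langle b\rangle$ of maximal length among those meeting $A$ nontrivially, proving it pure via the substitution $d=b-pc$, and then checking $A=\langle w\rangle\oplus(A\cap B_0)$ using $pA=0$ --- is sound; I verified the purity contradiction ($w=p^mc\neq0$ produces a longer cyclic submodule still meeting $A$) and the splitting of $A$ (directness of $B=\langle b\rangle\oplus B_0$ forces $prb=0$, hence $rb\in\langle w\rangle$). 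Two points you leave implicit but which are immediate: the identification $\langle b\rangle\cap A=\soc\langle b\rangle$ uses both that nonzero submodules of a uniserial module contain the socle and that $pA=0$ pushes the intersection down into the socle; and your bijection argument in the last paragraph never actually needs the Krull--Remak--Schmidt uniqueness you set up, only the existence of a picket decomposition, since the forward map is defined intrinsically via $\type(B)$ and $\type(B/A)$ and you verify both composites directly. The ordering argument making $(m_j-\ell_j)_j$ weakly decreasing is the right way to handle the bookkeeping. In short: a complete, elementary replacement for the external citation.
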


\begin{corollary} \label{corollary-skew-tableau}
Suppose the embedding $(A\subset B)\in\mathcal S_1$ 
corresponds to the skew tableau $\beta-\gamma$.
The number of boxes in the $m$-th row in $\beta-\gamma$ is equal to the 
multiplicity of the picket $P_1^m$ as a direct summand of $(A\subset B)$. 
\end{corollary}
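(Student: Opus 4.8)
The plan is to read the Corollary straight off Proposition~\ref{prop-elementary}, which already classifies the objects of $\mathcal S_1$; no further structure theory is needed. Suppose $(A\subset B)\in\mathcal S_1$ corresponds, under the bijection of that Proposition, to the horizontal strip $\beta-\gamma$, so that $\beta=\type(B)$ and $\gamma=\type(B/A)$. Then
$$(A\subset B)\;\cong\;\bigoplus\nolimits_i P^{\beta_i}_{\beta_i-\gamma_i},$$
one picket summand for each (nonzero) part $\beta_i$ of $\beta$. The decisive observation is that, since $\beta-\gamma$ is a horizontal strip, each difference $\beta_i-\gamma_i$ equals $0$ or $1$; hence the $i$-th summand is $P_0^{\beta_i}$ when the $i$-th row of $\beta-\gamma$ is empty, and it is $P_1^{\beta_i}$ when that row carries its single box. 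Consequently the only pickets that can occur as direct summands of $(A\subset B)$ are the $P_0^m$ and the $P_1^m$, and a copy of $P_1^m$ occurs once for each part $\beta_i$ equal to $m$ whose row of $\beta-\gamma$ is nonempty.

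It then only remains to translate this count into the row-count of the skew tableau. Rewriting the condition ``$\beta_i=m$ and $\beta_i-\gamma_i=1$'' as ``$\gamma_i=m-1<m=\beta_i$'', we obtain that the multiplicity of $P_1^m$ as a direct summand of $(A\subset B)$ is $\#\{\,i:\beta_i=m,\ \gamma_i=m-1\,\}$; and since $\beta-\gamma$ is a horizontal strip this equals $\#\{\,i:\gamma_i<m\le\beta_i\,\}$, which is precisely the number of boxes of $\beta-\gamma$ lying in the $m$-th row. This yields the asserted equality. (Note that the Corollary is also the instance $\ell=1$ of Theorem~\ref{theorem-picket} restricted to $\mathcal S_1$, where the subfactor appearing there reduces to $(A\subset B)$ itself; but Proposition~\ref{prop-elementary} is already at hand, so the argument above is self-contained.)

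There is really no obstacle to overcome here: the entire content lies in Proposition~\ref{prop-elementary}. The one point that repays a little care is the bookkeeping between the two subscripts of a picket $P_\ell^m$ — the superscript $m$ being the length of the cyclic module $R/(p^m)$, i.e.\ the size of the part $\beta_i$ to which the summand belongs — and the indexing of the rows of the skew tableau, so that ``the $m$-th row'' refers to the same row on both sides of the identity being proved.
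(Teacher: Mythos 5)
Your proposal is correct and follows essentially the same route as the paper: apply the bijection of Proposition~\ref{prop-elementary} to write $(A\subset B)\cong\bigoplus_i P^{\beta_i}_{\beta_i-\gamma_i}$ and identify the summands $P_1^m$ with the indices $i$ satisfying $\beta_i=m$, $\gamma_i=m-1$, whose count is $\beta'_m-\gamma'_m$, the number of boxes in row $m$. The extra remarks (the horizontal-strip bookkeeping and the aside about Theorem~\ref{theorem-picket}) are accurate but only elaborate what the paper's shorter proof states.
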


\begin{proof}
The number of boxes in the $m$-th row in $\beta-\gamma$ is equal to
$$\beta'_m-\gamma'_m\;=\;\#\{\;i\;|\;\beta_i=m\;\text{and}\;\gamma_i=m-1\};$$
under the correspondence given above, this is the number of summands $P_1^m$
in the direct sum decomposition for $(A\subset B)$. 
\end{proof}

\smallskip
As a category, $\mathcal S_1$ is an exact Krull-Remak-Schmidt category which has
Aus\-lan\-der-Reiten sequences, see \cite[Section 3.1]{s-bounded}.  The Auslander-Reiten
quiver consists of one tube that has 2 rays but only 1 coray;
in this quiver we represent the indecomposable objects by their LR-tableaux.

$$
\setcounter{boxsize}{3}
\beginpicture\setcoordinatesystem units <1cm,1cm>
\put {} at 0 4.6
\put {} at 0 -.6
\put{$\mathcal S_1:$} at -2 2
\put{\begin{picture}(3,0)\boxes21\end{picture}} at 0 2
\put{\begin{picture}(3,0)\boxes10\end{picture}} at 1 1
\put{\begin{picture}(3,0)\boxes31\end{picture}} at 1 3
\put{\begin{picture}(3,0)\boxes11\end{picture}} at 2 0
\put{\begin{picture}(3,0)\boxes20\end{picture}} at 2 2
\put{\begin{picture}(3,0)\boxes21\end{picture}} at 3 1
\put{\begin{picture}(3,0)\boxes30\end{picture}} at 3 3
\arr{.3 1.7}{.7 1.3}
\arr{.3 2.3}{.7 2.7}
\arr{.3 3.7}{.7 3.3}
\arr{1.3 .7}{1.7 .3}
\arr{1.3 1.3}{1.7 1.7}
\arr{1.3 2.7}{1.7 2.3}
\arr{1.3 3.3}{1.7 3.7}
\arr{2.3 .3}{2.7 .7}
\arr{2.3 1.7}{2.7 1.3}
\arr{2.3 2.3}{2.7 2.7}
\arr{2.3 3.7}{2.7 3.3}
\setsolid
\plot 0 1  0 1.5 /
\plot 0 2.5  0 4.5 /
\plot 3 0  3 .5 /
\plot 3 1.5  3 2.35 /
\plot 3 3.65 3 4.5 /
\setdots<2pt>
\plot 0 1  .65 1 /
\plot 2.35 0  3 0 /
\put{\vdots} at 1.5  4.5 
\endpicture
$$

The maps $g_1^m$ from the introduction occur as sink maps of the
pickets of type $P_1^m$.

\section{Categorification}

In this section we show Theorem~\ref{theorem-picket}.

\smallskip
For $M=(A\subset B)$ an embedding and $\ell$ a natural number let
$$M|^\ell_1\;=\; \left(\frac{p^{\ell-1}A}{p^\ell A}\subset \frac B{p^\ell A}\right)$$
be the reduced embedding, which is an object in $\mathcal S_1$.  
The following result is clear from the definition.

\begin{lemma}
Suppose $M\in\mathcal S$ has LR-tableau $\Gamma=[\gamma^{0},\ldots,\gamma^{s}]$.
For $1\leq \ell\leq s$, the LR-tableau for the reduced embedding $M|^\ell_1$ is 
the skew tableau
$$\Gamma|^\ell_1\;=\; [\gamma^{\ell-1},\gamma^\ell].$$ \qed
\end{lemma}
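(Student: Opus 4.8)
The plan is to reduce everything to Proposition~\ref{prop-elementary}. First I would check that $M|^\ell_1$ genuinely lies in $\mathcal S_1$: its submodule $p^{\ell-1}A/p^\ell A$ is annihilated by $p$, hence semisimple, so $M|^\ell_1\in\mathcal S_1$ and Proposition~\ref{prop-elementary} applies. By that proposition the isomorphism type of an object $(A'\subset B')$ of $\mathcal S_1$ — and therefore the associated skew tableau — is recorded by the horizontal strip $\type(B')-\type(B'/A')$. So it suffices to compute the two partitions $\type(B/p^\ell A)$ and $\type\bigl((B/p^\ell A)/(p^{\ell-1}A/p^\ell A)\bigr)$ and identify them with $\gamma^\ell$ and $\gamma^{\ell-1}$, respectively.

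The first is immediate from the definition of the LR-tableau for $M$, where $\gamma^\ell=\type(B/p^\ell A)$. For the second I would invoke the third isomorphism theorem: since $p^\ell A\subseteq p^{\ell-1}A\subseteq B$, there is a natural isomorphism $(B/p^\ell A)\bigl/(p^{\ell-1}A/p^\ell A)\cong B/p^{\ell-1}A$, whose type is $\gamma^{\ell-1}$ again by definition. Hence the skew tableau attached to $M|^\ell_1$ under the correspondence of Proposition~\ref{prop-elementary} is $\gamma^\ell-\gamma^{\ell-1}$, which in the LR-tableau notation is precisely the one-step tableau $[\gamma^{\ell-1},\gamma^\ell]$.

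The only point that deserves a word of care is that $\gamma^\ell-\gamma^{\ell-1}$ is a legitimate (horizontal-strip) skew tableau, i.e.\ $\gamma^{\ell-1}\leq\gamma^\ell$; this is part of condition~(1) in the definition of an LR-tableau, satisfied by $\Gamma$ via Green's theorem, and it is in any case forced here because $M|^\ell_1$ corresponds under Proposition~\ref{prop-elementary} to an honest horizontal strip. I do not expect any real obstacle: the argument is a direct unwinding of the definition of $\Gamma$ together with one application of the third isomorphism theorem, the sole bookkeeping being to match the skew-tableau convention $\beta-\gamma\leftrightarrow[\gamma,\beta]$ with the way $\Gamma|^\ell_1$ was written. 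This is why the statement is flagged as clear from the definitions.
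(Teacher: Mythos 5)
Your argument is correct and is exactly the unwinding the paper has in mind: the paper gives no proof beyond ``clear from the definition,'' and your computation of $\type(B/p^\ell A)=\gamma^\ell$ together with the third isomorphism theorem giving $(B/p^\ell A)/(p^{\ell-1}A/p^\ell A)\cong B/p^{\ell-1}A$ of type $\gamma^{\ell-1}$ is precisely that unwinding. The detour through Proposition~\ref{prop-elementary} is harmless but not needed, since the LR-tableau of the object $M|^\ell_1\in\mathcal S_1$ is by definition the two-term sequence $[\type(B'/A'),\type(B')]$.
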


Combining this result with Corollary~\ref{corollary-skew-tableau}
we obtain the equality of the numbers in (1) and (2) in Theorem~\ref{theorem-picket}:

\begin{corollary}
Suppose $M\in\mathcal S$ has LR-tableau $\Gamma$.  Let $1\leq\ell\leq m$. 
The number of boxes $\singlebox\ell$ in row $m$ in $\Gamma$ equals the
multiplicity of $P_1^m$ in a direct sum decomposition for $M|_1^\ell$.    \qed
\end{corollary}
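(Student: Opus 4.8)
The plan is simply to chain together the preceding Lemma with Corollary~\ref{corollary-skew-tableau}. First I would record that we may assume $1\le\ell\le s$, where $s=\alpha_1$ is the Loewy length of $A$: if $\ell>s$ then row $m$ of $\Gamma=[\gamma^0,\ldots,\gamma^s]$ contains no box $\singlebox\ell$ at all, while $p^{\ell-1}A=p^\ell A=0$ forces $M|^\ell_1=(0\subset B)$, whose picket summands are all of the form $P_0^{\beta_i}$, so the multiplicity of $P_1^m$ is likewise zero. With $\ell\le s$ in force, the reduced embedding $M|^\ell_1$ genuinely lies in $\mathcal S_1$ — indeed $p\cdot\bigl(p^{\ell-1}A/p^\ell A\bigr)=0$ — so that Proposition~\ref{prop-elementary} and its Corollary apply to it.

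Next I would invoke the Lemma: the LR-tableau of $M|^\ell_1$ is the skew tableau $\Gamma|^\ell_1=[\gamma^{\ell-1},\gamma^\ell]$, i.e.\ the shape $\gamma^\ell-\gamma^{\ell-1}$. By the pictorial convention for $\Gamma$, a box carries the label $\singlebox\ell$ precisely when it lies in the skew tableau $\gamma^\ell-\gamma^{\ell-1}$; hence the number of boxes $\singlebox\ell$ in row $m$ of $\Gamma$ is, literally, the number of boxes in row $m$ of the skew tableau associated with $M|^\ell_1$. Applying Corollary~\ref{corollary-skew-tableau} to the object $M|^\ell_1\in\mathcal S_1$ then identifies that number with the multiplicity of the picket $P_1^m$ as a direct summand of $M|^\ell_1$, and concatenating the two equalities yields the claim.

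There is no real obstacle here beyond bookkeeping. The one point deserving a moment's care is the identification made in the second step — that the set of boxes labelled $\ell$ in row $m$ of $\Gamma$ coincides on the nose with the set of boxes in row $m$ of $[\gamma^{\ell-1},\gamma^\ell]$ — together with the (routine) verification that $M|^\ell_1$ belongs to $\mathcal S_1$ so that the elementary classification is available. Since both ingredients are already in hand, the corollary follows immediately, which is why it is recorded with a \qed.
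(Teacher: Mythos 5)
Your proposal is correct and follows exactly the paper's (implicit) argument: the corollary is obtained by combining the preceding Lemma, which identifies the LR-tableau of $M|^\ell_1$ with the skew tableau $[\gamma^{\ell-1},\gamma^\ell]$, with Corollary~\ref{corollary-skew-tableau} applied to the object $M|^\ell_1\in\mathcal S_1$. Your extra remarks (the trivial case $\ell>s$ and the verification that $M|^\ell_1$ lies in $\mathcal S_1$) are routine bookkeeping that the paper leaves unstated, so there is no substantive difference.
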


For the proof of Theorem~\ref{theorem-picket}
we use the following three observations.

\smallskip
{\it Observation 1.} The number of boxes $\singlebox\ell$ in row $m$ 
in the LR-tableau $\Gamma=[\gamma^0,\ldots,\gamma^s]$ is
$$(\gamma^\ell)'_m - (\gamma^{\ell-1})'_m.$$

\smallskip
{\it Observation 2.} Suppose that an $R$-module $B$ has type $\beta$. 
Then the length of the $m$-th row in the partition $\beta$ is
$$\beta'_m \;=\; \len\Hom_R(B,P^m) - \len \Hom_R(B,P^{m-1}).$$

\smallskip
{\it Observation 3.} Homomorphisms into pickets can be expressed in terms
of homomorphisms between $R$-modules:
$$\Hom_{\mathcal S}( (A\subset B), P^m_\ell ) \;=\; \Hom_R ( B/p^\ell A, P^m )$$

\begin{proof}[Proof of Theorem \ref{theorem-picket}]
It remains to show the equality of the numbers in (1) and (3).
Suppose that the embedding $M:(A\subset B)$ has LR-tableau $\Gamma=[\gamma^0,\ldots,\gamma^s]$.

\smallskip
The number of boxes $\singlebox\ell$ in the $m$-th row in $\Gamma$ is
$(\gamma^\ell)'_m-(\gamma^{\ell-1})'_m$;
this number can be expressed in terms of $R$-homomorphisms as
\begin{eqnarray*} (*) & &  \len\Hom_R(B/p^\ell A, P^m) - \len\Hom_R(B/p^\ell A,P^{m-1})\\
  & & \qquad       -\len\Hom_R(B/p^{\ell-1}A, P^m) + \len\Hom_R(B/p^{\ell-1}A, P^{m-1})
\end{eqnarray*}
(Observation 2).

\smallskip
We first consider the case where $\ell<m$. 
Then $(*)$ can be written in terms of homomorphisms into pickets as 
\begin{eqnarray*} & & \len\Hom_{\mathcal S}((A\subset B),P^m_\ell) 
 - \len\Hom_{\mathcal S}((A\subset B),P^{m-1}_\ell)\\
 & & \qquad 
   -\len\Hom_{\mathcal S}((A\subset B),P^m_{\ell-1}) 
      + \len\Hom_{\mathcal S}((A\subset B),P^{m-1}_{\ell-1})
\end{eqnarray*}
which equals
$$\len\Cok\Hom_{\mathcal S}((A\subset B), g_\ell^m)$$
where $g_\ell^m$ is the epimorphism in the short exact sequence
$$0\longrightarrow P_{\ell-1}^{m-1}\longrightarrow P_{\ell-1}^m\oplus P_\ell^{m-1}
\stackrel{g_\ell^m}\longrightarrow P_\ell^m\longrightarrow 0.$$

\smallskip
In the remaining case where $\ell=m$, the expression $(*)$ simplifies to
$$ \len\Hom_R(B/p^\ell A, P^m) - \len\Hom_R(B/p^{\ell-1}A, P^m),$$
and hence to 
$$\len\Hom_{\mathcal S}((A\subset B),P^m_m) - \len\Hom_{\mathcal S}((A\subset B),P^m_{m-1})$$
which equals $\len\Cok\Hom_{\mathcal S}((A\subset B),g_m^m)$: For this apply
the functor $\Hom_{\mathcal S}((A\subset B),-)$ to the monomorphism
$g_m^m:P_{m-1}^m\to P_m^m$. 
\end{proof}

\begin{remark} Together, the maps of type $g_\ell^m$ 
form the sink  maps in the Auslander-Reiten quiver
for the category $\mathcal P{\it ic}$ which has as objects the direct sums of pickets;
morphisms are those maps for which each component is either zero or an
inclusion between pickets.
$$
\setcounter{boxsize}{3}
\beginpicture\setcoordinatesystem units <1cm,1cm>
\put {} at 0 4.9
\put {} at 0 -1
\put{$\mathcal P{\it ic}:$} at -2 2
\put{\begin{picture}(3,0)\boxes10\end{picture}} at 0 3
\put{\begin{picture}(3,0)\boxes11\end{picture}} at 1 4
\put{\begin{picture}(3,0)\boxes20\end{picture}} at 1 2
\put{\begin{picture}(3,0)\boxes21\end{picture}} at 2 3
\put{\begin{picture}(3,0)\boxes30\end{picture}} at 2 1
\put{\begin{picture}(3,0)\boxes23\end{picture}} at 3 4
\put{\begin{picture}(3,0)\boxes31\end{picture}} at 3 2
\put{\begin{picture}(3,0)\boxes40\end{picture}} at 3 0
\put{\begin{picture}(3,0)\boxes33\end{picture}} at 4 3
\put{\begin{picture}(3,0)\boxes41\end{picture}} at 4 1
\put{\begin{picture}(3,0)\boxes35\end{picture}} at 5 4
\put{\begin{picture}(3,0)\boxes43\end{picture}} at 5 2
\put{\begin{picture}(3,0)\boxes45\end{picture}} at 6 3
\put{\begin{picture}(3,0)\boxes48\end{picture}} at 7 4
\arr{.3 3.3}{.7 3.7}
\arr{.3 2.7}{.7 2.3}
\arr{1.3 3.7}{1.7 3.3}
\arr{1.3 2.3}{1.7 2.7}
\arr{1.3 1.7}{1.7 1.3}
\arr{2.3 3.3}{2.7 3.7}
\arr{2.3 2.7}{2.7 2.3}
\arr{2.3 1.3}{2.7 1.7}
\arr{2.3 .7}{2.7 .3}
\arr{3.3 3.7}{3.7 3.3}
\arr{3.3 2.3}{3.7 2.7}
\arr{3.3 1.7}{3.7 1.3}
\arr{3.3 .3}{3.7 .7}
\arr{3.3 -.3}{3.7 -.7}
\arr{4.3 3.3}{4.7 3.7}
\arr{4.3 2.7}{4.7 2.3}
\arr{4.3 1.3}{4.7 1.7}
\arr{4.3 .7}{4.7 .3}
\arr{5.3 3.7}{5.7 3.3}
\arr{5.3 2.3}{5.7 2.7}
\arr{5.3 1.7}{5.7 1.3}
\arr{6.3 3.3}{6.7 3.7}
\arr{6.3 2.7}{6.7 2.3}
\arr{7.3 3.7}{7.7 3.3}
\multiput{$\ddots$} at 5 0  7 2 /
\endpicture
$$
\end{remark}

In the Remark under Theorem~\ref{theorem-picket} we claim that
$\Im\Hom(M,g_\ell^m)$ is the submodule of $\Hom(M,P_\ell^m)$ generated
by all maps which factor through some proper inclusion of a picket
in $P_\ell^m$.  Since any such inclusion factors through $g_\ell^m$, 
the claim follows.

\section{Intervals in the Auslander-Reiten quiver}

We show Proposition~\ref{prop-bilinear}.
Let $1\leq\ell\leq m\leq n$.  Define $C_\ell^m=\tau_{\mathcal S(n)}^{-1}(P_{\ell-1}^{m-1})$
if $\ell<m$ and $C_\ell^m=P_n^n$ for $\ell=m$. 
We characterize the indecomposable objects $M\in\mathcal S(n)$ which have a box
$\singlebox\ell$ in the $m$-th row of their LR-tableau 
by the existence of maps $f_1:C_\ell^m\to M$, $f_2:M\to P_\ell^m$ 
such that $f_2f_1$ does not factor through $g_\ell^m$.
In this sense, $M$ occurs in the interval from $C_\ell^m$ to $P_\ell^m$ 
in the Auslander-Reiten quiver for $\mathcal S(n)$.

\smallskip
First we determine $C_\ell^m$ in the case where $\ell < m$ and compute its LR-tableau.
We use \cite[Theorem~5.2]{rs-art} in which the Auslander-Reiten translation is
computed for indecomposable objects in the factor module category $\mathcal F(n)$.
The kernel and cokernel functors induce an equivalence between the categories 
$\mathcal F(n)$ and $\mathcal S(n)$ \cite[Lemma~1.2~(3)]{rs-art},
so we can compute $\tau_{\mathcal S(n)}^{-1}(P_{\ell-1}^{m-1})$ 
as the kernel of the minimal epimorphism representing $P_{\ell-1}^{m-1}$.

\smallskip
For the inclusion $P_{\ell-1}^{m-1}:(P^{\ell-1}\stackrel{\incl}{\subset} P^{m-1})$
the minimal epimorphism is 
$$P^n\oplus P^{\ell-1}\; \stackrel{({\rm can},\incl)}{\longrightarrow}\; P^{m-1},$$
and the map $(\incl, -\can):P^{n+\ell-m}\to P^n\oplus P^{\ell-1}$
is a kernel. This monomorphism represents the object $C_\ell^m$ in $\mathcal S(n)$. 

\smallskip
We determine the LR-tableau $\Gamma$ for $C_\ell^m=(A\subset B)$. 
Let $\alpha=\type(A)=(n+\ell-m)$, $\beta=\type(B)=(n,\ell-1)$, and $\gamma=\type(B/A)=(m-1)$.
Since there is only one filling of $\beta-\gamma$ in which each number
$1,\ldots,n+\ell-m$ occurs exactly once, the LR-tableau must be as follows:

\begin{center}
\setcounter{boxsize}{3}
  \begin{picture}(20,33)
    \put(-10,13){$\Gamma:$}
    \put(10,0){\begin{picture}(3,12)\put(0,3){\numbox t} 
        \put(0,9)\vdotbox 
        \put(0,12){\numbox \ell}
        \put(3,21){\numbox{\ell'}}
        \put(3,27)\vdotbox
        \put(3,30){\numbox 1}
        \put(-12,23){${}_{m-1}\left\{\makebox(0,9){}\right.$}
        \put(0,15){\rectbox6}
      \end{picture}}
  \end{picture}
\end{center}

Here, $t=n+\ell-m$ and $\ell'=\ell-1$. The LR-tableau for the picket $C_m^m=P_n^n$
is as follows:

\begin{center}
\setcounter{boxsize}{3}
  \begin{picture}(20,12)
    \put(-10,5){$\Gamma:$}
    \put(10,0){\begin{picture}(3,12)\put(0,0){\numbox n} 
        \put(0,6)\vdotbox 
        \put(0,9){\numbox 1}
      \end{picture}}
  \end{picture}
\end{center}

We can now complete the 
proof of Proposition~\ref{prop-bilinear}.

\begin{proof}  Let $1\leq \ell\leq m\leq n$.

1. In each case, 
there is exactly one box $\singlebox \ell$ in row $m$ in the above LR-tableau 
for $C_\ell^m$,
so $\dim\Cok\Hom(C_\ell^m,g_\ell^m)=1$, by Theorem~\ref{theorem-picket}.

\smallskip
2. We consider first the case where $\ell<m$.  Let $\mathcal A$ be the sequence
$$\mathcal A: \quad 
       0\to P_{\ell-1}^{m-1}\to P_{\ell-1}^m\oplus P_\ell^{m-1}\stackrel{g_\ell^m}\to P_\ell^m\to 0.$$
and let 
$$ \mathcal E:\quad 0\to P_{\ell-1}^{m-1}\to B\to C\to 0$$
be the Auslander-Reiten sequence in $\mathcal S(n)$ starting at $P_{\ell-1}^{m-1}$,
its end term is $C=C_\ell^m$. 
Since $\mathcal A$ is nonsplit, there are maps 
$h':B\to P_{\ell-1}^m\oplus P_\ell^{m-1}$, $h:C\to P_\ell^m$ which make the upper part
of the following diagram commutative.
$$
\begin{CD}
\mathcal E:\quad @.  0 @>>> P_{\ell-1}^{m-1} @>u>> B @>v>> C @>>> 0 \\
@. @.  @|   @V{h'}VV  @VV{h}V   \\
\mathcal A:\quad @.  0 @>>> P_{\ell-1}^{m-1} @>{f_\ell^m}>> P_{\ell-1}^m\oplus P_\ell^{m-1} @>{g_\ell^m}>> P_\ell^m @>>> 0\\
@.  @.  @|   @A{r'}AA @AA{r}A \\
r\mathcal A:\quad @. 0 @>>> P_{\ell-1}^{m-1} @>{s}>> L @>{t}>> M @>>> 0 \\
\end{CD}
$$
Suppose that $M\in\mathcal S(n)$.
In order to show that the bilinear form given by composition 
$$\frac{\Hom(M,P_\ell^m)}{\Im\Hom(M,g_\ell^m)}\times \Hom(C,M)\longrightarrow
  \frac{\Hom(C,P_\ell^m)}{\Im\Hom(C,g_\ell^m)}$$
is left non-degenerate,
let $r:M\to P_\ell^m$ be a map which does not factor through $g_\ell^m$. 
We will construct $q:C\to M$ such that $rq$ does not factor through $g_\ell^m$.
Since $r$ does not factor through $g_\ell^m$, the induced sequence at the bottom 
of the above diagram does not split. Hence the map $s$ factors through $u$:
There is a map $q':B\to L$ such that $s=q'u$.  Let $q:C\to M$ be the cokernel map,
so $qv=tq'$.  Then $rqv=rtq'=g_\ell^mr'q'$.  Since $r'q'u=r's=f_\ell^m=h'u$,
there exists $z:C\to P_{\ell-1}^m\oplus P_\ell^{m-1}$ such that $zv=r'q'-h'$.
So $rqv=g_\ell^mr'q'=g_\ell^m(zv+h')=(g_\ell^m z+h)v$ and since $v$ is onto,
$rq=g_\ell^mz +h$.  Since $\mathcal E$ is not split exact, $h$ does not factor through
$g_\ell^m$, and hence  $rq$ does not factor over $g_\ell^m$.

\smallskip
We deal with the case where $\ell=m$. 
Let $f_2:M\to P_m^m$ be a map which does not factor
through the inclusion $g_m^m:P_{m-1}^m\to P_m^m$ of the maximal submodule, 
so $f_2$ is an epimorphism.
Since $P_n^n$ is projective --- even in the abelian category of all maps between
$R/(p^n)$-modules --- the canonical map $P_n^n\to P_m^m$ factors through $f_2$.
\end{proof}

\section{Duality}

Let $I$ be the injective envelope of the simple $R$-module $P^1$.  Then the functor
$*=\Hom_R(-,I)$ defines a duality for $R$-modules, which gives rise to a duality
for short exact sequences of $R$-modules, and hence yields a duality on $\mathcal S$.  

\smallskip
We have seen that the entries in the LR-tableau for an object $M\in\mathcal S$ 
are isomorphism invariants for $M$ and can be interpreted in terms of homomorphisms
from $M$ into pickets.
Also the entries in the LR-tableau for $M^*$ are isomorphism invariants for $M$;
they have the following interpretation in terms of homomorphisms from pickets into $M$.

\begin{theorem} \label{theorem-picket-dual}
  For an embedding $M=(A\subset B)\in\mathcal S$ and $1\leq \ell\leq m$, the 
  following numbers are equal.
  \begin{enumerate}
    \item The number of boxes $\singlebox\ell$ 
      in the $m$-th row of the LR-tableau for the dual module $M^*$.
    \item The multiplicity of $P_1^m$ as a direct summand of
      $$\left(\frac{p^{\ell-1}U}{p^\ell U}\subset \frac{B^*}{p^\ell U}\right)$$
      where $U=\ann_{B^*}A$ is the annihilator of $A$ in $B^*$.
    \item The dimension
      $$\dim_k\frac{\Hom_{\mathcal S}(P^m_{m-\ell},M)}{\Im\Hom_{\mathcal S}(h^m_{m-\ell},M)}.$$
  \end{enumerate}
\end{theorem}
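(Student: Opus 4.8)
The plan is to derive Theorem~\ref{theorem-picket-dual} from Theorem~\ref{theorem-picket} by applying the duality $*=\Hom_R(-,I)$ throughout, rather than redoing the combinatorial and homological bookkeeping from scratch. The key point is that the duality on $\mathcal S$ sends an embedding $M=(A\subset B)$ to $M^*=(\ann_{B^*}A\subset B^*)$, so that statement (1) of the present theorem is literally statement (1) of Theorem~\ref{theorem-picket} applied to the object $M^*$ — once we check that the reduced embedding $(p^{\ell-1}U/p^\ell U\subset B^*/p^\ell U)$ with $U=\ann_{B^*}A$ is exactly the object $M^*|_1^\ell$ occurring there. Thus the equality of the numbers in (1) and (2) will be immediate from the already-established equality of (1) and (2) in Theorem~\ref{theorem-picket}, applied verbatim to $M^*$ in place of $M$.

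For the equality with (3), I would first record how $*$ acts on pickets: one checks directly that $(P_\ell^m)^* \cong P_{m-\ell}^m$, since dualizing the inclusion $(p^{m-\ell})\subset R/(p^m)$ gives the inclusion of the annihilator, which has colength $\ell$, hence length $m-\ell$, inside $R/(p^m)$. Next I would check that $*$ carries the sink map $g_\ell^m$ to the source map $h^m_{m-\ell}$ appearing in statement (3): applying the exact contravariant functor $*$ to the short exact sequences defining $g_\ell^m$ (the sequence $0\to P_{\ell-1}^{m-1}\to P_{\ell-1}^m\oplus P_\ell^{m-1}\to P_\ell^m\to 0$ when $\ell<m$, and the monomorphism $P_{m-1}^m\to P_m^m$ when $\ell=m$) produces short exact sequences in which $(P_\ell^m)^*=P_{m-\ell}^m$ sits on the left, and by definition $h^m_{m-\ell}$ is the resulting source map. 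Since $*$ is a duality on $\mathcal S$, it induces $k$-linear isomorphisms
$$\Hom_{\mathcal S}(M,P_\ell^m)\;\cong\;\Hom_{\mathcal S}((P_\ell^m)^*,M^*)\;=\;\Hom_{\mathcal S}(P_{m-\ell}^m,M^*)$$
carrying the image of $\Hom_{\mathcal S}(M,g_\ell^m)$ onto the image of $\Hom_{\mathcal S}(h^m_{m-\ell},M^*)$. Hence the dimension in statement (3) of the present theorem, read for $M^*$, equals the dimension in statement (3) of Theorem~\ref{theorem-picket}, read for $M$ — wait, more precisely: applying Theorem~\ref{theorem-picket} to the object $M^*$ gives that the number of boxes $\singlebox\ell$ in row $m$ of the LR-tableau for $M^*$ equals $\dim_k \Hom_{\mathcal S}(M^*,P_\ell^m)/\Im\Hom_{\mathcal S}(M^*,g_\ell^m)$, and the duality isomorphism just described rewrites this last dimension as $\dim_k \Hom_{\mathcal S}(P_{m-\ell}^m,M^{**})/\Im\Hom_{\mathcal S}(h^m_{m-\ell},M^{**})$; since $M^{**}\cong M$ naturally, this is the quantity in statement (3). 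This closes the chain of equalities.

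The main obstacle I anticipate is purely bookkeeping: pinning down the index shift $\ell\leftrightarrow m-\ell$ consistently, and verifying that the duality really does identify $g_\ell^m$ with $h^m_{m-\ell}$ on the nose (including the boundary case $\ell=m$, where $g_m^m$ is a monomorphism and its dual is an epimorphism $P_0^m\to P_0^m$-type map — one must confirm this matches the stated definition of $h^m_0$, which presumably will be spelled out alongside $h^m_{m-\ell}$ in the text following the theorem). A secondary point requiring a line of justification is that $*$ genuinely sends $(A\subset B)$ to $(\ann_{B^*}A\subset B^*)$ and that this operation squares to the identity up to natural isomorphism; both are standard, but I would state them explicitly since statement (2) is phrased directly in terms of the annihilator $U=\ann_{B^*}A$. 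No new homological input beyond Theorem~\ref{theorem-picket} and the exactness of $*$ should be needed.
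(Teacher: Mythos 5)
Your proposal is correct and takes essentially the same route as the paper: the paper also derives the theorem directly from Theorem~\ref{theorem-picket} applied to $M^*$, noting that $M^*=(U\subset B^*)$ with $U=\ann_{B^*}A$ (giving (1)$=$(2)) and that $h_{m-\ell}^m$ is the dual of $g_\ell^m$, so the duality isomorphism $\Hom_{\mathcal S}(M^*,P_\ell^m)\cong\Hom_{\mathcal S}(P_{m-\ell}^m,M)$ identifies the two quotient spaces (giving (1)$=$(3)). The boundary case you flagged works out as you expected: the dual of the monomorphism $g_m^m:P_{m-1}^m\to P_m^m$ is the map $P_0^m\to P_1^m$, which is exactly $h_0^m$ as defined after the theorem.
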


For $1\leq q\leq m$, the morphism $h_q^m$ 
is the following map between sums of pickets;
each component map is an epimorphism on the total space and on the factor space.
$$h_q^m\;:\;\left\{\begin{array}{rcll}
            P^m_0 & \to & P^m_1, & \text{if}\; q=0 \\
            P^m_q & \to & P_{q-1}^{m-1}\oplus P_{q+1}^m, & \text{if}\; 1\leq q<m\\
            P_m^m & \to & P^{m-1}_{m-1}, & \text{if}\; q=m\\
      \end{array}\right.$$

\begin{proof}
The result is an easy consequence of Theorem \ref{theorem-picket}:

\smallskip
For the equality of the numbers in (1) and (2) it suffices to note 
that if $M$ is the embedding $(A\subset B)$, then the dual $M^*$ 
is the embedding $(U\subset B^*)$ where $U=\ann_{B^*}A$.

\smallskip
We show the equality of the numbers in (1) and (3)  
According to Theorem~\ref{theorem-picket},
the number in (1) is the dimension
$$\dim_k\frac{\Hom(M^*,P_\ell^m)}{\Im\Hom(M^*,g_\ell^m)},$$
which is equal to the dimension
$$\dim_k\frac{\Hom(P^m_{m-\ell},M)}{\Im\Hom(h_{m-\ell}^m,M)}$$
since $h_{m-\ell}^m$ is the dual of the map $g_\ell^m$.
\end{proof}

With the following result we can position $M$ within the category $\mathcal S$:

\begin{proposition}\label{prop-bilinear-dual}
  Suppose that $0\leq q <m\leq n$. 
  Let $A_q^m=\tau_{\mathcal S(n)}P_q^{m-1}$ if $q>0$ and $A^m_q=P^n_0$ if $q=0$.
  \begin{enumerate}
  \item The factor $\displaystyle\frac{\Hom(P_q^m,A_q^m)}{\Im\Hom(h_q^m,A_q^m)}$ 
    is a 1-dimensional $k$-vector space.
  \item For $M\in\mathcal S(n)$, the bilinear form given by composition,
    $$\qquad\Hom_{\mathcal S}(M,A_q^m)\times
    \frac{\Hom_{\mathcal S}(P^m_q,M)}{\Im\Hom_{\mathcal S}(h^m_q,M)}
    \longrightarrow
    \frac{\Hom_{\mathcal S}(P^m_q,A_q^m)}{\Im\Hom_{\mathcal S}(h^m_q,A_q^m)}$$
    is right non-degenerate.
  \end{enumerate}
\end{proposition}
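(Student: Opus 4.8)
The plan is to obtain Proposition~\ref{prop-bilinear-dual} from Proposition~\ref{prop-bilinear} by applying the duality functor $*=\Hom_R(-,I)$ from Section~5, in exactly the same way that Theorem~\ref{theorem-picket-dual} was derived from Theorem~\ref{theorem-picket}. The key facts to assemble are: the duality $*$ on $\mathcal S$ is exact and contravariant, it interchanges the maps $g_\ell^m$ and $h_{m-\ell}^m$ (already used in the proof of Theorem~\ref{theorem-picket-dual}), it sends the picket $P_\ell^m$ to $P_{m-\ell}^m$, and it commutes with the Auslander--Reiten translation in the finite subcategory $\mathcal S(n)$ up to the usual contravariant twist, i.e.\ $(\tau_{\mathcal S(n)}^{-1}X)^*\cong\tau_{\mathcal S(n)}(X^*)$, since $*$ restricts to a duality on $\mathcal S(n)$ and dualities exchange $\tau$ and $\tau^{-1}$. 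One must also check the boundary bookkeeping: under $P_\ell^m\mapsto P_{m-\ell}^m$ the index $\ell$ with $1\le\ell\le m$ corresponds to $q=m-\ell$ with $0\le q<m$, which is precisely the range in Proposition~\ref{prop-bilinear-dual}, and the degenerate-end case $\ell=m$ in Proposition~\ref{prop-bilinear} matches $q=0$, where $C_\ell^m=P_n^n$ dualizes to $A_0^m=P_0^n$.

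Concretely, I would argue as follows. First I would verify the identification $(C_q^m)^*\cong A_q^m$ for the relevant objects. For $q>0$ write $\ell=m-q$, so $C_\ell^m=\tau_{\mathcal S(n)}^{-1}(P_{\ell-1}^{m-1})$; applying $*$ and using $(P_{\ell-1}^{m-1})^*\cong P_{(m-1)-(\ell-1)}^{m-1}=P_{m-\ell}^{m-1}=P_q^{m-1}$ together with $(\tau_{\mathcal S(n)}^{-1}X)^*\cong\tau_{\mathcal S(n)}(X^*)$ gives $(C_\ell^m)^*\cong\tau_{\mathcal S(n)}P_q^{m-1}=A_q^m$. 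For the boundary case $q=0$, $(C_m^m)^*=(P_n^n)^*\cong P_0^n=A_0^m$. Next, part~(1): applying the contravariant exact duality to the one-dimensional space $\Hom(C_\ell^m,P_\ell^m)/\Im\Hom(C_\ell^m,g_\ell^m)$ from Proposition~\ref{prop-bilinear}(1) yields a $k$-linear isomorphism onto $\Hom(P_q^m,A_q^m)/\Im\Hom(h_q^m,A_q^m)$, because $*$ turns $g_\ell^m$ into $h_q^m$ and turns the subspace of maps factoring through $g_\ell^m$ into the subspace of maps factoring through $h_q^m$; hence the latter is also one-dimensional.

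Finally, part~(2): the bilinear composition pairing in Proposition~\ref{prop-bilinear-dual} is, term by term, the $*$-dual of the pairing in Proposition~\ref{prop-bilinear}(2) with $M$ replaced by $M^*$. Precisely, $\Hom_{\mathcal S}(M,A_q^m)\cong\Hom_{\mathcal S}((A_q^m)^*,M^*)\cong\Hom_{\mathcal S}(C_\ell^m,M^*)$ and $\Hom_{\mathcal S}(P_q^m,M)/\Im\Hom_{\mathcal S}(h_q^m,M)\cong\Hom_{\mathcal S}(M^*,P_\ell^m)/\Im\Hom_{\mathcal S}(M^*,g_\ell^m)$, and the duality sends composition of maps to composition in the opposite order, so it identifies our pairing with the pairing
$$\frac{\Hom_{\mathcal S}(M^*,P_\ell^m)}{\Im\Hom_{\mathcal S}(M^*,g_\ell^m)}\times\Hom_{\mathcal S}(C_\ell^m,M^*)\longrightarrow\frac{\Hom_{\mathcal S}(C_\ell^m,P_\ell^m)}{\Im\Hom_{\mathcal S}(C_\ell^m,g_\ell^m)}$$
from Proposition~\ref{prop-bilinear}(2) applied to the embedding $M^*\in\mathcal S(n)$ (note $M^*\in\mathcal S(n)$ whenever $M\in\mathcal S(n)$, since $*$ preserves the bound $p^nB=0$). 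That pairing is left non-degenerate; since $*$ is a duality of $k$-vector spaces, left non-degeneracy of the original pairing becomes right non-degeneracy of its dual, which is exactly the assertion. The main obstacle I anticipate is not the non-degeneracy transfer, which is formal, but the careful verification that $*$ genuinely exchanges $g_\ell^m$ with $h_{m-\ell}^m$ as \emph{morphisms in $\mathcal S$} (matching the explicit component descriptions of the two sink maps, including the three boundary cases $\ell=0,\ell=m$ versus $q=0,q=m$) and that $*$ intertwines $\tau_{\mathcal S(n)}$ and $\tau_{\mathcal S(n)}^{-1}$ with the correct indexing on pickets; once these compatibilities are nailed down, the proposition follows by transport of structure.
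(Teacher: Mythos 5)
Your proposal is correct and follows exactly the paper's route: the paper proves this proposition in one line, "by duality," from Proposition~\ref{prop-bilinear}, and your argument simply spells out the details of that transport of structure (the identifications $(P_\ell^m)^*\cong P_{m-\ell}^m$, $(g_\ell^m)^*=h_{m-\ell}^m$, $(\tau_{\mathcal S(n)}^{-1}X)^*\cong\tau_{\mathcal S(n)}(X^*)$, and the passage from left to right non-degeneracy).
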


\begin{proof}
This result follows from Proposition~\ref{prop-bilinear} by duality.
\end{proof}

\section{Example: Submodules of $p^5$-bounded modules.} \label{section-examples}

The results in this paper can be visualized on the Auslander-Reiten quivers
of categories of embeddings.  In this section we consider the category $\mathcal S(5)$, 
which among all the categories
of type $\mathcal S(n)$ is the largest of finite representation type
\cite{rs1}.  
There are 50 indecomposable objects in $\mathcal S(5)$; we picture here  
the Auslander-Reiten quiver $\Gamma_{\mathcal S(5)}$ 
from \cite[(6.5)]{rs1}, with the objects represented
by their LR-tableaux.

\def\GammaFive{\multiput{\ssq} at 1.9 -.1  1.9 .1 /  
\multiput{\ssq} at 3.9 -.4  3.9 -.2  3.9 0  3.9 .2  3.9 .4 /
\put{\num1} at 4 0
\put{\num2} at 4 -.2
\put{\num3} at 4 -.4
\multiput{\ssq} at 5.9 -.2  5.9 0  5.9 .2 /
\put{\num1} at 6 .2
\put{\num2} at 6 0
\put{\num3} at 6 -.2
\multiput{\ssq} at 7.9 -.2  7.9 0  7.9 .2 /
\multiput{\ssq} at 9.9 -.4  9.9 -.2  9.9 0  9.9 .2  9.9 .4 /
\put{\num1} at 10 -.2
\put{\num2} at 10 -.4
\multiput{\ssq} at 11.9 -.1  11.9 .1 /
\put{\num1} at 12 .1
\put{\num2} at 12 -.1 
\multiput{\ssq} at .8 1.3  .8 1.5  .8 1.7  1 1.7 /
\put{\num1} at 1.1 1.7
\put{\num2} at .9 1.3
\multiput{\ssq} at  2.8 1.1  2.8 1.3  2.8 1.5  2.8 1.7  2.8 1.9  3 1.7  3 1.9 /
\put{\num1} at 3.1 1.7
\put{\num2} at 2.9 1.3
\put{\num3} at 2.9 1.1
\multiput{\ssq} at 4.8 1.1  4.8 1.3  4.8 1.5  4.8 1.7  4.8 1.9  
                5 1.9  5 1.5  5 1.7 /
\multiput{\num1} at 4.9 1.5  5.1 1.9 /
\multiput{\num2} at 4.9 1.3  5.1 1.7 /
\put{\num4} at 4.9 1.1
\put{\num3} at 5.1 1.5
\multiput{\ssq} at 6.8 1.2  6.8 1.4  6.8 1.6  6.8 1.8  7 1.6  7 1.8 /
\put{\num1} at 7.1 1.8
\put{\num2} at 7.1 1.6
\put{\num3} at 6.9 1.2
\multiput{\ssq} at  9 1.5  9 1.7  9 1.9  
                8.8 1.1  8.8 1.3  8.8 1.5  8.8 1.7  8.8 1.9 /
\put{\num1} at 9.1 1.5
\put{\num2} at 8.9 1.1 
\multiput{\ssq} at 10.8 1.1  10.8 1.3  10.8 1.5  10.8 1.7  10.8 1.9  
                                11 1.7  11 1.9 /
\multiput{\num1} at 11.1 1.9  10.9 1.3 /
\put{\num2} at 11.1 1.7 
\put{\num3} at 10.9 1.1 
\multiput{\ssq} at 12.8 1.3  12.8 1.5  12.8 1.7  13 1.7 /
\put{\num1} at 13.1 1.7
\put{\num2} at 12.9 1.3
%
%
\multiput{\ssq} at .8 2.2  .8 2.4  .8 2.6  .8 2.8  .8 3  1 3 /
\put{\num1} at 1.1 3
\put{\num2} at .9 2.4
\put{\num3} at .9 2.2
\multiput{\ssq} at 1.7 2.6  1.7 2.8  1.7 3  1.7 3.2  1.7 3.4  
                1.9 3.4 1.9 3  1.9 3.2  2.1 3.4 /
\multiput{\num1} at 2 3.2  2.2 3.4 /
\multiput{\num2} at 2 3  1.8 2.8 /
\put{\num3} at 1.8 2.6
\multiput{\ssq} at 2.9 2.4  2.9 2.6  2.9 2.8 /
\put{\num1} at 3 2.6
\put{\num2} at 3 2.4
\multiput{\ssq} at 3.7 2.6  3.7 2.8  3.7 3  3.7 3.2  3.7 3.4  
                3.9 3.4  3.9 3  3.9 3.2   4.1 3.4  4.1 3.2 /
\multiput{\num1} at 4.2 3.4  4 3.2 /
\multiput{\num2} at 4.2 3.2  3.8 2.8 /
\put{\num3} at 4 3
\put{\num4} at 3.8 2.6
\multiput{\ssq} at 4.8 2.2  4.8 2.4  4.8 2.6  4.8 2.8  4.8 3  5 2.8  5 3 /
\put{\num1} at 5.1 3
\put{\num2} at 5.1 2.8
\put{\num3} at 4.9 2.4
\put{\num4} at 4.9 2.2
\multiput{\ssq} at 5.7 2.6  5.7 2.8  5.7 3  5.7 3.2  5.7 3.4  
                5.9 2.8  5.9 3  5.9 3.2  5.9 3.4
                6.1 3.4  6.1 3.2 /
\multiput{\num1} at 6.2 3.4  6.0 3 /
\multiput{\num2} at 6.2 3.2  5.8 2.8 /
\put{\num3} at 6.0 2.8 
\put{\num4} at 5.8 2.6
\multiput{\ssq} at 6.9 2.3  6.9 2.5  6.9 2.7  6.9 2.9 /
\put{\num1} at 7 2.5
\put{\num2} at 7 2.3
\multiput{\ssq} at 7.7 2.6  7.7 2.8  7.7 3  7.7 3.2  7.7 3.4  
                7.9 2.8  7.9 3  7.9 3.2  7.9 3.4   8.1 3.2  8.1 3.4 /
\multiput{\num1} at 8.2 3.4  8 3 /
\multiput{\num2} at 8.2 3.2  7.8 2.6 /
\put{\num3} at 8 2.8
\multiput{\ssq} at 8.8 2.2  8.8 2.4  8.8 2.6  8.8 2.8  8.8 3  9 2.8  9 3  /
\put{\num1} at 9.1 3
\put{\num2} at 9.1 2.8
\put{\num3} at 8.9 2.2
\multiput{\ssq} at 9.7 2.6  9.7 2.8  9.7 3  9.7 3.2  9.7 3.4 
                9.9 3  9.9 3.2  9.9 3.4  10.1 3.2  10.1 3.4 /
\multiput{\num1} at 10.2 3.4  10 3 /
\put{\num2} at 10.2 3.2
\put{\num3} at 9.8 2.6
\multiput{\ssq} at 10.9 2.4  10.9 2.6  10.9 2.8  /
\put{\num1} at 11 2.4
\multiput{\ssq} at 11.7 2.6  11.7 2.8  11.7 3  11.7 3.2  11.7 3.4  
                11.9 3.4  11.9 3  11.9 3.2       12.1 3.4 /
\multiput{\num1} at 12.2 3.4  11.8 2.8 /
\put{\num2} at 12 3
\put{\num3} at 11.8 2.6
\multiput{\ssq} at 12.8 2.2  12.8 2.4  12.8 2.6  12.8 2.8  12.8 3  13 3 /
\put{\num1} at 13.1 3
\put{\num2} at 12.9 2.4
\put{\num3} at 12.9 2.2
\multiput{\ssq} at .8 3.6  .8 3.8  .8 4  .8 4.2  .8 4.4  1 4  1 4.4  1 4.2 /
\multiput{\num1} at .9 3.8  1.1 4.2 /
\put{\num2} at 1.1 4 
\put{\num3} at .9 3.6
\multiput{\ssq} at  2.7 3.6  2.7 3.8  2.7 4  2.7 4.2  2.7 4.4  
                2.9 4.4  2.9 4  2.9 4.2      3.1 4.4 /
\multiput{\num1} at 3 4.4  3.2 4.4 /
\multiput{\num2} at 3 4.2  2.8 3.8 /
\put{\num3} at 3 4 
\put{\num4} at 2.8 3.6
\multiput{\ssq} at 4.8 3.7  4.8 3.9  4.8 4.1  4.8 4.3  5 4.3  5 4.1 /
\put{\num1} at 5.1 4.1
\put{\num2} at 4.9 3.7
\multiput{\ssq} at  6.7 3.6  6.7 3.8  6.7 4  6.7 4.2  6.7 4.4  
                6.9 3.6  6.9 3.8  6.9 4  6.9 4.2  6.9 4.4   7.1 4.2  7.1 4.4 /
\multiput{\num1} at 7.2 4.4  7 4 /
\multiput{\num2} at 7.2 4.2  6.8 3.6 /
\put{\num3} at 7 3.8
\put{\num4} at 7 3.6
\multiput{\ssq} at   9 4.3  9 4.1  8.8 3.7  8.8 3.9  8.8 4.1  8.8 4.3 /
\multiput{\num1} at 9.1 4.3  8.9 3.9 /
\put{\num2} at 9.1 4.1
\put{\num3} at 8.9 3.7
\multiput{\ssq} at    10.7 3.6  10.7 3.8  10.7 4  10.7 4.2  10.7 4.4  
                10.9 4.4   10.9 4  10.9 4.2   11.1 4.4 /
\put{\num1} at 11.2 4.4 
\put{\num2} at 11 4
\put{\num3} at 10.8 3.6
\multiput{\ssq} at 12.8 3.6  12.8 3.8  12.8 4  12.8 4.2  12.8 4.4  13 4  13 4.4  13 4.2 /
\multiput{\num1} at 12.9 3.8  13.1 4.2 /
\put{\num2} at 13.1 4 
\put{\num3} at 12.9 3.6
\multiput{\ssq} at 1.8 4.6  1.8 4.8  1.8 5  1.8 5.2  1.8 5.4  
                2 5.4  2 5  2 5.2 /
\multiput{\num1} at 2.1 5.4  1.9 4.8 /
\put{\num2} at 2.1 5.2
\put{\num3} at 2.1 5
\put{\num4} at 1.9 4.6
\multiput{\ssq} at 3.8 4.7  3.8 4.9  3.8 5.1  3.8 5.3  4 5.3 /
\put{\num1} at 4.1 5.3
\put{\num2} at 3.9 4.7
\multiput{\ssq} at 5.8 4.6  5.8 4.8  5.8 5  5.8 5.2  5.8 5.4  6 5.2  6 5.4 /
\put{\num1} at 6.1 5.2 
\put{\num2} at 5.9 4.6
\multiput{\ssq} at  7.8 4.6  7.8 4.8  7.8 5  7.8 5.2  7.8 5.4  8 5.2  8 5.4 /
\multiput{\num1} at 8.1 5.4  7.9 5 /
\put{\num2} at 8.1 5.2
\put{\num3} at 7.9 4.8
\put{\num4} at 7.9 4.6
\multiput{\ssq} at  9.8 4.7  9.8 4.9  9.8 5.1  9.8 5.3  10 5.3 /
\put{\num1} at 10.1 5.3
\put{\num2} at 9.9 4.9
\put{\num3} at 9.9 4.7
\multiput{\ssq} at 11.8 4.6  11.8 4.8  11.8 5  11.8 5.2  11.8 5.4  
                12 5.4  12 5  12 5.2 /
\put{\num1} at 12.1 5.2
\put{\num2} at 12.1 5
\put{\num3} at 11.9 4.6
\multiput{\ssq} at .8 5.6  .8 5.8  .8 6  .8 6.2  .8 6.4  1 6.4  1 6  1 6.2 /
\put{\num1} at 1.1 6.4
\put{\num2} at 1.1 6.2
\put{\num3} at 1.1 6
\put{\num4} at .9 5.6
\multiput{\ssq} at  2.9 5.7  2.9 5.9  2.9 6.1  2.9 6.3 /
\put{\num1} at 3 5.7
\multiput{\ssq} at  4.8 5.6  4.8 5.8  4.8 6  4.8 6.2  4.8 6.4  5 6.4 /
\put{\num1} at 5.1 6.4
\put{\num2} at 4.9 5.6
\multiput{\ssq} at  6.9 5.9  6.9 6.1 /
\put{\num1} at 7 5.9
\multiput{\ssq} at  8.8 5.6  8.8 5.8  8.8 6  8.8 6.2  8.8 6.4  9 6.4 /
\put{\num1} at 9.1 6.4
\put{\num2} at 8.9 6
\put{\num3} at 8.9 5.8
\put{\num4} at 8.9 5.6
\multiput{\ssq} at 10.9 5.7  10.9 5.9  10.9 6.1  10.9 6.3 /
\put{\num1} at 11 6.1
\put{\num2} at 11 5.9
\put{\num3} at 11 5.7
\multiput{\ssq} at 12.8 5.6  12.8 5.8  12.8 6  12.8 6.2  12.8 6.4  13 6.4  13 6  13 6.2 /
\put{\num1} at 13.1 6.4
\put{\num2} at 13.1 6.2
\put{\num3} at 13.1 6
\put{\num4} at 12.9 5.6
\multiput{\ssq} at   1.9 6.7  1.9 6.9  1.9 7.1  1.9 7.3 /
\multiput{\ssq} at    2.9 7.6  2.9 7.8  2.9 8  2.9 8.2  2.9 8.4 /
\multiput{\ssq} at  3.9 6.6  3.9 6.8  3.9 7  3.9 7.2  3.9 7.4 /
\put{\num1} at 4 6.6
\multiput{\ssq} at  5.9 7 /
\put{\num1} at 6 7
\multiput{\ssq} at   7.9 7 /
\multiput{\ssq} at  9.9 6.6  9.9 6.8  9.9 7  9.9 7.2  9.9 7.4 /
\put{\num1} at 10 7.2 
\put{\num2} at 10 7
\put{\num3} at 10 6.8
\put{\num4} at 10 6.6
\multiput{\ssq} at  10.9 7.6  10.9 7.8  10.9 8  10.9 8.2  10.9 8.4 /
\put{\num1} at 11 8.4
\put{\num2} at 11 8.2
\put{\num3} at 11 8
\put{\num4} at 11 7.8
\put{\num5} at 11 7.6
\multiput{\ssq} at  11.9 6.7  11.9 6.9  11.9 7.1  11.9 7.3 /
\put{\num1} at 12 7.3
\put{\num2} at 12 7.1
\put{\num3} at 12 6.9
\put{\num4} at 12 6.7
\arr{1.3 1.05} {1.7 0.45} 
\arr{2.3 0.45} {2.7 1.05} 
\arr{3.3 1.05} {3.7 0.45} 
\arr{4.3 0.45} {4.7 1.05} 
\arr{5.3 1.05} {5.7 0.45} 
\arr{6.3 0.45} {6.7 1.05} 
\arr{7.3 1.05} {7.7 0.45} 
\arr{8.3 0.45} {8.7 1.05} 
\arr{9.3 1.05} {9.7 0.45} 
\arr{10.3 0.45} {10.7 1.05} 
\arr{11.3 1.05} {11.7 0.45} 
\arr{12.3 0.45} {12.7 1.05} 
\arr{1.3 1.95} {1.6 2.4} 
\arr{2.3 2.55} {2.7 1.95} 
\arr{3.3 1.95} {3.6 2.4} 
\arr{4.3 2.55} {4.7 1.95} 
\arr{5.3 1.95} {5.7 2.55} 
\arr{6.4 2.4 } {6.7 1.95} 
\arr{7.3 1.95} {7.6 2.4 } 
\arr{8.4 2.4 } {8.7 1.95} 
\arr{9.3 1.95} {9.7 2.55} 
\arr{10.3 2.55} {10.7 1.95} 
\arr{11.3 1.95} {11.7 2.55} 
\arr{12.3 2.55} {12.7 1.95} 
\arr{1.3 3.7} {1.6 3.4} 
\arr{2.3 3.3} {2.6 3.6} 
\arr{3.3 3.7} {3.6 3.4} 
\arr{4.4 3.4} {4.7 3.7} 
\arr{5.3 3.7} {5.6 3.4} 
\arr{6.4 3.4} {6.6 3.6} 
\arr{7.4 3.6} {7.6 3.4} 
\arr{8.4 3.4} {8.7 3.7} 
\arr{9.3 3.7} {9.7 3.3} 
\arr{10.4 3.4} {10.7 3.7} 
\arr{11.4 3.6} {11.7 3.3} 
\arr{12.4 3.4} {12.7 3.7} 
\arr{1.3 4.3} {1.7 4.7} 
\arr{2.3 4.7} {2.6 4.4} 
\arr{3.3 4.3} {3.7 4.7} 
\arr{4.3 4.7} {4.7 4.3} 
\arr{5.3 4.3} {5.7 4.7} 
\arr{6.3 4.7} {6.6 4.4} 
\arr{7.4 4.4} {7.7 4.7} 
\arr{8.3 4.7} {8.7 4.3} 
\arr{9.3 4.3} {9.7 4.7} 
\arr{10.3 4.7} {10.7 4.3} 
\arr{11.4 4.4} {11.7 4.7} 
\arr{12.3 4.7} {12.7 4.3} 
\arr{1.3 5.7} {1.7 5.3} 
\arr{2.3 5.3} {2.7 5.7} 
\arr{3.3 5.7} {3.7 5.3} 
\arr{4.3 5.3} {4.7 5.7} 
\arr{5.3 5.7} {5.7 5.3} 
\arr{6.3 5.3} {6.7 5.7} 
\arr{7.3 5.7} {7.7 5.3} 
\arr{8.3 5.3} {8.7 5.7} 
\arr{9.3 5.7} {9.7 5.3} 
\arr{10.3 5.3} {10.7 5.7} 
\arr{11.3 5.7} {11.7 5.3} 
\arr{12.3 5.3} {12.7 5.7} 
\arr{1.3 6.3} {1.7 6.7} 
\arr{2.3 6.7} {2.7 6.3} 
\arr{3.3 6.3} {3.7 6.7} 
\arr{4.3 6.7} {4.7 6.3} 
\arr{5.3 6.3} {5.7 6.7} 
\arr{6.3 6.7} {6.7 6.3} 
\arr{7.3 6.3} {7.7 6.7} 
\arr{8.3 6.7} {8.7 6.3} 
\arr{9.3 6.3} {9.7 6.7} 
\arr{10.3 6.7} {10.7 6.3} 
\arr{11.3 6.3} {11.7 6.7} 
\arr{12.3 6.7} {12.7 6.3} 

\arr{2.3 7.3} {2.7 7.7} 
\arr{3.3 7.7} {3.7 7.3} 
\arr{10.3 7.3} {10.7 7.7} 
\arr{11.3 7.7} {11.7 7.3} 

\arr{1.3 2.72}{1.6 2.84}
\arr{2.4 2.84}{2.7 2.72}
\arr{3.3 2.72}{3.6 2.84}
\arr{4.4 2.84}{4.7 2.72}
\arr{5.3 2.72}{5.6 2.84}
\arr{6.4 2.84}{6.7 2.72}
\arr{7.3 2.72}{7.6 2.84}
\arr{8.4 2.84}{8.7 2.72}
\arr{9.3 2.72}{9.6 2.84}
\arr{10.4 2.84}{10.7 2.72}
\arr{11.3 2.72}{11.6 2.84}
\arr{12.4 2.84}{12.7 2.72}
\setdots<2pt>
\plot 1 0  1.7 0 /
\plot 2.3 0  3.7 0 /
\plot 4.3 0  5.7 0 /
\plot 6.3 0  7.7 0 /
\plot 8.3 0  9.7 0 /
\plot 10.3 0  11.7 0 /
\plot 12.3 0  13 0 /
\plot 1.3 2.6  1.6 2.6 /
\plot 2 2.6  2.7 2.6 /
\plot 3.3 2.6  3.6 2.6 /
\plot 4 2.6  4.7 2.6 /
\plot 5.3 2.6  5.6 2.6 /
\plot 6 2.6  6.7 2.6 /
\plot 7.3 2.6  7.6 2.6 /
\plot 8 2.6  8.7 2.6 /
\plot 9.3 2.6  9.6 2.6 /
\plot 10 2.6  10.7 2.6 /
\plot 11.3 2.6  11.6 2.6 /
\plot 12 2.6  12.7 2.6 /
\plot 1 7  1.7 7 /
\plot 4.3 7  5.7 7 /
\plot 6.3 7  7.7 7 /
\plot 8.3 7  9.7 7 /
\plot 12.3 7  13 7 /
\setsolid
\plot 1 0  1 1.1 /
\plot 1 1.9  1 2 /
\plot 1 3.2  1 3.4 /
\plot 1 4.6  1 5.4 /
\plot 1 6.6  1 7 /
\plot 13 0  13 1.1 /
\plot 13 1.9  13 2 /
\plot 13 3.2  13 3.4 /
\plot 13 4.6  13 5.4 /
\plot 13 6.6  13 7 / }

\begin{figure}[hb]
$$
\hbox{\beginpicture
\setcoordinatesystem units <.95cm,1.1cm>
\GammaFive
\endpicture}
$$
\end{figure}

\medskip
Recall that for $R=k[[T]]$ the power series ring, the homomorphisms between indecomposable
modules are given as the linear combinations of paths, modulo mesh relations.

\smallskip
In the second copy of $\Gamma_{\mathcal S(5)}$, the pickets are encircled, and there is an
indecomposable object labelled $M$ which has the following LR-tableau.
$$
\hbox{\beginpicture
\setcoordinatesystem units <1.3cm,1.3cm>
\put{$M:$} at .5 .5 
\multiput{\ssq} at 1.9 .1  1.9 .3  1.9 .5  1.9 .7  1.9 .9  2.1 .5  2.1 .7  2.1 .9  2.3  .9 /  
\multiput{\num1} at 2.2 .7  2.4  .9 /
\multiput{\num2} at 2 .3  2.2 .5 /
\put{\num3} at 2 .1 
\endpicture}
$$

For each pair $(\ell,m)$ such that 
$$\frac{\Hom_{\mathcal S}(M,P_\ell^m)}{\Im\Hom_{\mathcal S}(M,g_\ell^m)} \;\neq \;0$$
we indicate a path $M\to P_\ell^m$ representing a map which does not factor through $g_\ell^m$.
There are the following five paths:
$$ M\to P_1^1,\quad  M\to P_1^2, \quad M\to P_2^3, \quad M\to P_2^4, \quad\text{and}\quad 
   M\to P_3^5$$
Corresponding to each such path $M\to P_\ell^m$ there is an entry $\singlebox{\ell}$
in the $m$-th row of the LR-tableau for $M$, as predicted by Theorem~\ref{theorem-picket}.

\begin{figure}[hb]
$$
\hbox{\beginpicture
\setcoordinatesystem units <.95cm,1.1cm>
\GammaFive
\ellipticalarc axes ratio 2:3 360 degrees from 2.3 0 center at 2 0
\ellipticalarc axes ratio 2:5 360 degrees from 4.3 0 center at 4 0
\ellipticalarc axes ratio 3:5 360 degrees from 6.3 0 center at 6 0
\ellipticalarc axes ratio 3:5 360 degrees from 8.3 0 center at 8 0
\ellipticalarc axes ratio 2:5 360 degrees from 10.3 0 center at 10 0
\ellipticalarc axes ratio 2:3 360 degrees from 12.3 0 center at 12 0
\ellipticalarc axes ratio 1:2 360 degrees from 2.3 7 center at 2 7
\ellipticalarc axes ratio 2:5 360 degrees from 4.3 7 center at 4 7
\ellipticalarc axes ratio 1:1 360 degrees from 6.3 7 center at 6 7
\ellipticalarc axes ratio 1:1 360 degrees from 8.3 7 center at 8 7
\ellipticalarc axes ratio 2:5 360 degrees from 10.3 7 center at 10 7
\ellipticalarc axes ratio 1:2 360 degrees from 12.3 7 center at 12 7
\ellipticalarc axes ratio 2:5 360 degrees from 3.3 8 center at 3 8
\ellipticalarc axes ratio 1:2 360 degrees from 3.3 6 center at 3 6
\ellipticalarc axes ratio 2:5 360 degrees from 11.3 8 center at 11 8
\ellipticalarc axes ratio 1:2 360 degrees from 11.3 6 center at 11 6
\ellipticalarc axes ratio 2:3 360 degrees from 7.3 6 center at 7 6
\ellipticalarc axes ratio 3:5 360 degrees from 3.3 2.6 center at 3 2.6
\ellipticalarc axes ratio 1:2 360 degrees from 7.3 2.6 center at 7 2.6
\ellipticalarc axes ratio 3:5 360 degrees from 11.3 2.6 center at 11 2.6
\put{$\scriptstyle M$} at 2 4
\linethickness=6pt
\setplotsymbol({.})
\ellipticalarc axes ratio 3:5 160 degrees from 2 3.8 center at 2 3
\ellipticalarc axes ratio 3:5 -60 degrees from 2 3.8 center at 2 3
\arr{2.5 2.60}{2.6 2.57}
\arr{3.5 .245}{3.6 .21}
\arr{6.5 2.62}{6.6 2.62}
\arr{5.78 6.5}{5.85 6.6}
\arr{6.5 6}{6.6 6}
\setquadratic
\plot 2.25 2.72  2.5 2.6  2.6 2.6 /
\plot 1.9 2.3  2.5 1  3.6 .2 /
\plot 2.1 2.5  2.8 1  4 .6  5 .9  5.5 1.5  6.1 2.43  6.5 2.6 /
\plot 2.3 3.2  4.1 4.5  5.85 6.6 /
\plot 2.3 3  2.8 3  3.6 3.6  4.6 4.6  5.6 5.6  6 5.9  6.6 6 /
\endpicture}
$$
\end{figure}

\smallskip
Note that nonisomorphic objects in $\mathcal S$ can have the same LR-tableau.
Consider for example the Auslander-Reiten sequence
$$0\to C_2^4 \to M\to P_2^3 \to 0$$
with middle term $M$.  The modules $M$ and $C_2^4\oplus P_2^3$ have the same LR-tableau,
and hence cannot be distinguished by homomorphisms into pickets.

\smallskip
We focus on the case where $\ell=2$ and $m=4$.  The indecomposables which have
an entry $\singlebox{2}$ in the $4$-th row in their LR-tableau are in the region
labelled $\mathcal R$ in the third copy of the Auslander-Reiten quiver 
$\Gamma_{\mathcal S(5)}$.
Note that the two ``eyes'' are not part of the region $\mathcal R$.
Each object $M$ in $\mathcal R$ 
admits a map $t: M\to P_2^4$ which does not factor through $g_2^4$.
(In the diagram, the module $P_2^4$ is labelled ``$Z$'', while the summands 
$P_1^4$ and $P_2^3$ of the 
source of $g_2^4$ are labelled ``$Y_1$'' and ``$Y_2$''.)
According to Proposition~\ref{prop-bilinear}, those modules admit a map $t':C_2^4\to M$
such that the composition $tt'$ does not factor through $g_2^4$. 
The module $C_2^4$ is obtained as follows. Let $\mathcal E$ be the short exact sequence
given by $g_2^4$:
$$\mathcal E: 0\quad\longrightarrow \quad P_1^3
                     \stackrel{f_2^4}{\longrightarrow}\quad
                   P_1^4 \;\oplus\; P_2^3 \quad
                     \stackrel{g_2^4}{\longrightarrow}\quad
                   P_2^4 \quad\longrightarrow\quad 0 $$
and put $C_2^4=\tau_{\mathcal S(5)}^{-1}(P_1^3)$ (in the diagram, the modules 
$C_2^4$ and $P_1^3$ are labelled ``C'' and ``$X$'', respectively).
As predicted by the Proposition, 
the indecomposables in $\mathcal S(5)$ which have a $\singlebox{2}$ in the
$4$-th row are located between $C_2^4$ and $P_2^4$.

\begin{figure}[ht]
$$
\hbox{\beginpicture
\setcoordinatesystem units <.95cm,1.1cm>
\GammaFive
\setsolid
\setquadratic
\plot 1.5 1.6  .5 2.6  1.5 3.6  2.3 4.2  3 5  4 5.7  5 5  5.7 4.2  6.5 3.6  7.3 2.6  
      6.5 1.6  5.5 1  5 .4  4 -.7  3 .4  2.5 1  1.5 1.6 /
\ellipticalarc axes ratio 3:5 360 degrees from 4.9 3.35 center at 4.9 2.7
\ellipticalarc axes ratio 3:5 360 degrees from 3.1 3.35 center at 3.1 2.7
\put{$\scriptstyle M$} at 2 3.7
\put{$\mathcal R$} at .4 3
\put{$\scriptstyle Y_2$} at 3.1 3.1 
\put{$\scriptstyle Y_1$} at 3.0 6.6
\put{$\scriptstyle Z$} at 6.75 2.95
\put{$\scriptstyle X$} at 11 3.1
\put{$\scriptstyle C$} at 1.2 2.4
\put{$\scriptstyle C=$} at 13.25 2.7
\put{$\scriptstyle A$} at 13.2 2.4
\put{$\mathcal R^*$} at 13.4 3.45
\setdots<2pt>
\setquadratic
\plot 7.5 1.7  6.5 2.7  7.5 3.7  8.3 4.3  9 5.1  10 5.8  11 5.1  11.7 4.3  12.5 3.7  13.5 2.7  
      12.5 1.7  11.5 1.1  11 .5  10 -.6  9 .5  8.5 1.1  7.5 1.7 /
\ellipticalarc axes ratio 3:5 360 degrees from 11.1 3.35 center at 11.1 2.7
\ellipticalarc axes ratio 3:5 360 degrees from 8.9 3.35 center at 8.9 2.7
\endpicture} 
$$
\end{figure}

\bigskip
We consider duality:

\smallskip
Let us locate those indecomposable objects $M\in\mathcal S(5)$ for which the 
LR-tableau of $M^*$ contains a box $\singlebox 2$ in row 4. 
Note that duality acts on the above Auslander-Reiten quiver 
by reflection on the central vertical axis.  
It follows that the modules $M$ as above are located within the region $\mathcal R^*$
encircled by the dotted line, and without the two dotted ellipses. 
According to Theorem~\ref{theorem-picket-dual}, 
they are characterized in terms of homomorphisms from pickets, as follows.
The dual of the above sequence $\mathcal E$ is 
$$\setcounter{boxsize}{3}
\mathcal E^*: 0\quad\longrightarrow \quad P_2^4 \quad
                     \stackrel{h_2^4}{\longrightarrow}\quad
                   P_3^4 \;\oplus\;
                   P_1^3 \quad \longrightarrow\quad P_2^3 \quad\longrightarrow\quad 0.$$

\bigskip
The modules in $\mathcal R^*$ admit a map $t:P_2^4\to M$ which does not factor over $h_2^4$.
Let $A=A_2^4=\tau_{\mathcal S(5)}(P_2^3)$ be the $\tau$-translate 
of the cokernel of $h_2^4$; it is indicated at the right end of the region $\mathcal R^*$.
According to Proposition~\ref{prop-bilinear-dual} 
the modules $M$ in $\mathcal R^*$ are between $P_2^4$ and $A_2^4$ in the sense that given $t$, 
there is a map $t'':M\to A_2^4$ such that $t''t$ does not 
factor through $h_2^4$.

\medskip
Address of the author: 
Mathematical Sciences, 
Florida Atlantic University,
Boca Raton, Florida 33431-0991

\smallskip
E-mail: {\tt markusschmidmeier@gmail.com}

\end{document}